\pgfplotsset{compat=1.15}
\newtheorem{thm}{Theorem}
\newtheorem{cor}[thm]{Corollary} 
\newtheorem{prop}[thm]{Proposition}
\newtheorem{lemma}[thm]{Lemma}
\theoremstyle{definition}
\newtheorem{defn}[thm]{Definition}
\newtheorem{ques}[thm]{Question}
\providecommand{\customgenericname}{}
\newcommand{\newcustomtheorem}[2]{%
  \newenvironment{#1}[1]
  {%
   \renewcommand\customgenericname{#2}%
   \renewcommand\theinnercustomgeneric{##1}%
   \innercustomgeneric
  }
  {\endinnercustomgeneric}
}
\theoremstyle{remark}
\newtheorem{rem}[thm]{Remark}
\newtheorem*{ack}{Acknowledgment}
\newcommand{\Z}{\mathbb{Z}}
\newcommand{\R}{\mathbb{R}} 
\newcommand{\1}{\mathbf{1}}
\newcommand{\As}{\mathscr{A}}
\newcommand{\Bs}{\mathscr{B}}
\newcommand{\Ec}{\mathcal{E}}
\newcommand{\Fs}{\mathscr{F}}
\newcommand{\M}{\mathcal{M}}
\newcommand{\Nc}{\mathcal{N}}
\newcommand{\Ns}{\mathscr{N}}
\newcommand{\Pc}{\mathcal{P}}
\newcommand{\Qc}{\mathcal{Q}}
\newcommand{\Ss}{\mathscr{S}}
\newcommand{\eps}{\varepsilon}
\newcommand{\dist}{\operatorname{dist}}
\newcommand{\supp}{\operatorname{supp}}
\newcommand{\loc}{\mathrm{loc}}
\def\@tocline#1#2#3#4#5#6#7{\relax
  \ifnum #1>\c@tocdepth 
  \else
    \par \addpenalty\@secpenalty\addvspace{#2}%
    \begingroup \hyphenpenalty\@M
    \@ifempty{#4}{%
      \@tempdima\csname r@tocindent\number#1\endcsname\relax
    }{%
      \@tempdima#4\relax
    }%
    \parindent\z@ \leftskip#3\relax \advance\leftskip\@tempdima\relax
    \rightskip\@pnumwidth plus4em \parfillskip-\@pnumwidth
    #5\leavevmode\hskip-\@tempdima
      \ifcase #1
       \or\or \hskip 1em \or \hskip 2em \else \hskip 3em \fi%
      #6\nobreak\relax
    \hfill\hbox to\@pnumwidth{\@tocpagenum{#7}}\par
    \nobreak
    \endgroup
  \fi}
	\title[Morrey-type Spaces on Lipschitz Domains]{Some Intrinsic Characterizations of Besov-Triebel-Lizorkin-Morrey-type Spaces on Lipschitz Domains}          
\author[]{Liding Yao} 
\address{Liding Yao, Department of Mathematics,
	The Ohio State University, Columbus, OH 43210} 
\email{yao.1015@osu.edu}
\keywords{Rychkov's extension operator, Lipschitz domains, Besov-type space, Triebel-Lizorkin-type space, Besov-Morrey space} 
\subjclass[2020]{46E35 (primary), 42B35 and 42B25 (secondary)}
\begin{document}

\begin{abstract}
    We give Littlewood-Paley type characterizations for Besov-Triebel-Lizorkin-type spaces $\mathscr B_{pq}^{s\tau},\mathscr F_{pq}^{s\tau}$ and Besov-Morrey spaces $\mathcal N_{uqp}^s$ on a special Lipschitz domain $\Omega\subset\mathbb R^n$: for a suitable sequence of Schwartz functions $(\phi_j)_{j=0}^\infty$,
    \begin{align*}
        \|f\|_{\mathscr B_{pq}^{s\tau}(\Omega)}&\textstyle\approx\sup_{P\text{ dyadic cubes}}|P|^{-\tau}\|(2^{js}\phi_j\ast f)_{j=\log_2\ell(P)}^\infty\|_{\ell^q(L^p(\Omega\cap P))};\\
        \|f\|_{\mathscr F_{pq}^{s\tau}(\Omega)}&\textstyle\approx\sup_{P\text{ dyadic cubes}}|P|^{-\tau}\|(2^{js}\phi_j\ast f)_{j=\log_2\ell(P)}^\infty\|_{L^p(\Omega\cap P;\ell^q)};\\
        \|f\|_{\mathcal N_{uqp}^{s}(\Omega)}&\textstyle\approx\big\|\big(\sup_{P\text{ dyadic cubes}}|P|^{\frac1u-\frac1p}\cdot 2^{js}\|\phi_j\ast f\|_{L^p(\Omega\cap P)}\big)_{j=0}^\infty\big\|_{\ell^q}.
    \end{align*}
    
    We also show that $\|f\|_{\mathscr B_{pq}^{s\tau}(\Omega)}$, $\|f\|_{\mathscr F_{pq}^{s\tau}(\Omega)}$ and $\|f\|_{\mathcal N_{uqp}^{s}(\Omega)}$ have equivalent (quasi-)norms via derivatives: for $\mathscr X^\bullet\in\{\mathscr B_{pq}^{\bullet,\tau},\mathscr F_{pq}^{\bullet,\tau},\mathcal N_{uqp}^\bullet\}$, we have $\|f\|_{\mathscr X^s(\Omega)}\approx\sum_{|\alpha|\le m}\|\partial^\alpha f\|_{\mathscr X^{s-m}(\Omega)}$. 
    
    In particular $\|f\|_{\mathscr F_{\infty q}^s(\Omega)}\approx\sum_{|\alpha|\le m}\|\partial^\alpha f\|_{\mathscr F_{\infty q}^{s-m}(\Omega)}\approx\sup_{P}|P|^{-n/q}\|(2^{js}\phi_j\ast f)_{j=\log_2\ell(P)}^\infty\|_{\ell^q(L^q(\Omega\cap P))}$.
\end{abstract}
\maketitle

\section{Introduction}

Let $\Omega\subset\R^n$ be a \textit{special Lipschitz domain}, that is, $\Omega$ is of the form $\{(x',x_n):x_n>\rho(x')\}$ where $\rho:\R^{n-1}\to\R$ is a Lipschitz function such that $\|\nabla\rho\|_{L^\infty}<\infty$. (See also \cite[Definition 1.103]{TriebelTheoryOfFunctionSpacesIII}.)

In \cite{ExtensionLipschitz}, based on the construction of his extension operator, Rychkov gave a Littlewood-Paley type intrinsic characterization of the Triebel-Lizorkin spaces on $\Omega$: for $0<p<\infty$, $0<q\le\infty$ and $s\in\R$, $\Fs_{pq}^s(\Omega)$ has the following equivalent (quasi-)norm (see \cite[Theorem 3.2]{ExtensionLipschitz}):
\begin{equation}\label{Eqn::Intro::IntroLPChar}
    f\mapsto\|(2^{js}\phi_j\ast f)_{j=0}^\infty\|_{\ell^q(\Z_{\ge0};L^p(\Omega))}=\bigg(\int_\Omega\Big(\sum_{j=0}^\infty 2^{jsq}|\phi_j\ast f(x)|^q\Big)^{p/q}dx\bigg)^{1/p}.
\end{equation}
We take obvious modification for $q=\infty$. Here $(\phi_j)_{j=0}^\infty$ is a carefully chosen family of Schwartz functions such that the convolution $\phi_j\ast f$ is defined on $\Omega$, see Definition \ref{Defn::Intro::Basic}.

In \cite[Proposition 6.6]{ShiYaoExt}, we used Rychkov's construction to prove that $\|f\|_{\Fs_{pq}^s(\Omega)}$ have equivalent (quasi-)norms via their derivatives. More precisely, let $m\ge1$, for every $0<p<\infty$, $0<q\le\infty$ and $s\in\R$ there is a $C=C(\Omega,p,q,s,m)>0$ such that
\begin{equation}\label{Eqn::Intro::IntroDerChar}
    C^{-1}\|f\|_{\Fs_{pq}^s(\Omega)}\le\sum_{|\alpha|\le m}\|\partial^\alpha f\|_{\Fs_{pq}^{s-m}(\Omega)}\le C\|f\|_{\Fs_{pq}^s(\Omega)},\quad\forall f\in\Fs_{pq}^s(\Omega). 
\end{equation}

Both \eqref{Eqn::Intro::IntroLPChar} and \eqref{Eqn::Intro::IntroDerChar} miss the endpoint: do we have the analogy of \eqref{Eqn::Intro::IntroLPChar} and \eqref{Eqn::Intro::IntroDerChar} for $p=\infty$? In this paper, we give the positive answers to both cases, by using the recently developed Triebel-Lizorkin-type spaces $\Fs_{pq}^{s\tau}$: we have the coincidences $\Fs_{\infty q}^s=\Fs_{pq}^{s,\frac1p}=\Bs_{qq}^{s,\frac1q}$ for $0<p<\infty$ (see \eqref{Eqn::Intro::CharFInftyQ}).

To make the results more general, we include the discussions of Besov-type spaces $\Bs_{pq}^{s\tau}$ and the Besov-Morrey spaces $\Ns_{pq}^{s\tau}$, see Definition \ref{Defn::Intro::DefMorrey}.

\medskip
We denote by $\Qc$ the set of dyadic cubes in $\R^n$, that is
\begin{equation}
    \Qc:=\{Q_{J,v}:J\in\Z,v\in\Z^n\},\quad\text{where}\quad Q_{J,v}:=2^{-J}v+(0,2^{-J})^n.
\end{equation}

Our result for \eqref{Eqn::Intro::IntroLPChar} is the following:

\begin{thm}[Littlewood-Paley type characterizations]\label{Thm::LPNorm} Let $\Omega=\{(x',x_n):x_n>\rho(x')\}\subset\R^n$ be a special Lipschitz domain and let $(\phi_j)_{j=0}^\infty$ be a Littlewood-Paley family associated with $\Omega$ (see Definition \ref{Defn::Intro::Basic}). Then for $0<p,q\le\infty$, $s\in\R$ and $\tau\ge0$ ($p<\infty$ for $\Fs$-cases), we have the following equivalent (quasi-)norms:
\begin{align*}
        \|f\|_{\Bs_{pq}^{s\tau}(\Omega)}&\approx_{\phi,p,q,s,\tau}\|(2^{js}\1_\Omega\cdot(\phi_j\ast f))_{j=0}^\infty\|_{\ell^qL^p_\tau}=
    \sup_{Q_{J,v}\in\Qc}2^{nJ\tau}\Big(\sum_{j=\max(0,J)}^\infty2^{jsq}\|\phi_j\ast f\|_{L^p(Q_{J,v}\cap\Omega)}^q\Big)^\frac1q;
    \\
    \|f\|_{\Fs_{pq}^{s\tau}(\Omega)}&\approx_{\phi,p,q,s,\tau}\|(2^{js}\1_\Omega\cdot(\phi_j\ast f))_{j=0}^\infty\|_{L^p_\tau\ell^q}=
    \sup_{Q_{J,v}\in\Qc}2^{nJ\tau}\Big(\int_{Q_{J,v}\cap\Omega}\Big(\sum_{j=\max(0,J)}^\infty2^{jsq}|\phi_j\ast f(x)|^q\Big)^\frac pqdx\Big)^\frac1p;
    \\
    \|f\|_{\Ns_{pq}^{s\tau}(\Omega)}&\approx_{\phi,p,q,s,\tau}\|(2^{js}\1_\Omega\cdot(\phi_j\ast f))_{j=0}^\infty\|_{\ell^qM^p_\tau}=\Big(\sum_{j=0}^\infty\sup_{Q_{J,v}\in\Qc} 2^{(js+nJ\tau)q}\|\phi_j\ast f\|_{L^p(Q_{J,v}\cap\Omega)}^q\Big)^\frac1q.
\end{align*}
 {\normalfont(See Definition \ref{Defn::Intro::SeqSpaces} for $\ell^qL^p_\tau$, $L^p_\tau\ell^q$ and $\ell^qM^p_\tau$.)}
 In particular for $0<q\le\infty$ and $s\in\R$,
$$\|f\|_{\Fs_{\infty q}^s(\Omega)}\approx_{\phi,q,s}\sup_{J\in\Z,v\in\Z^n}2^{J\frac nq}\int_{Q_{J,v}\cap\Omega}\Big(\sum_{j=\max(0,J)}^\infty2^{jqs}|\phi_j\ast f(x)|^qdx\Big)^\frac1q.$$
\end{thm}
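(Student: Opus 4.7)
The plan is to establish each of the three equivalences by separately proving the two inequalities between the intrinsic right-hand side and the restriction norm $\|f\|_{\mathscr X^{s\tau}(\Omega)}:=\inf\{\|F\|_{\mathscr X^{s\tau}(\R^n)}:F|_\Omega=f\}$, with $\mathscr X\in\{\Bs,\Fs,\Ns\}$. Two ingredients are central. The first is a \emph{generalized Littlewood-Paley characterization} of the target spaces on $\R^n$: for a sufficiently general Schwartz family $(\phi_j)$ satisfying Tauberian and moment conditions, the same $\sup_Q|Q|^{-\tau}\|\cdots\|_{\ell^qL^p(Q)}$-type expressions furnish equivalent quasi-norms on $\Bs_{pq}^{s\tau}(\R^n)$, $\Fs_{pq}^{s\tau}(\R^n)$ and $\Ns_{pq}^{s\tau}(\R^n)$; for the standard Fourier-annulus families this is the Yuan--Sickel--Yang theory, and its extension to local-means families including Rychkov's cone-supported $(\phi_j)$ is carried out in earlier sections of the paper. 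The second ingredient is the cone-support property of $(\phi_j)$ built into Definition \ref{Defn::Intro::Basic}: whenever $F\in\mathscr S'(\R^n)$ extends $f\in\mathscr D'(\Omega)$, one has $(\phi_j\ast F)|_\Omega=\phi_j\ast f$, so $\phi_j\ast f$ is well-defined on $\Omega$ intrinsically and independently of the chosen extension.

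The upper bound $\text{RHS}\lesssim\|f\|_{\mathscr X^{s\tau}(\Omega)}$ then follows in one line: for any extension $F$ of $f$, the intrinsic expression over $Q_{J,v}\cap\Omega$ is dominated by the analogous $\R^n$ expression over $Q_{J,v}$ with $F$ (by monotonicity of integration and the cone-support identity), which by the generalized LP characterization on $\R^n$ applied to the family $(\phi_j)$ is $\lesssim\|F\|_{\mathscr X^{s\tau}(\R^n)}$; taking the infimum over $F$ gives the bound, uniformly across the three cases.

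The lower bound $\|f\|_{\mathscr X^{s\tau}(\Omega)}\lesssim\text{RHS}$ is the technical core and is obtained by producing a concrete extension via Rychkov's operator. Starting from a reproducing formula $f=\sum_j\psi_j\ast\phi_j\ast f$ that holds in a neighborhood of $\Omega$, Rychkov's construction yields an extension of the schematic form $\mathcal Ef=\sum_{j\ge0}\psi_j\ast[(\phi_j\ast f)\1_\Omega]$. To bound $\|\mathcal Ef\|_{\mathscr X^{s\tau}(\R^n)}$ we convolve with a standard dyadic Fourier-annulus family $(\eta_k)$, expand $\eta_k\ast\mathcal Ef=\sum_j(\eta_k\ast\psi_j)\ast[(\phi_j\ast f)\1_\Omega]$, exploit almost-orthogonality $|\eta_k\ast\psi_j(x)|\lesssim 2^{-N|k-j|}(\text{polynomial tail})$, and then invoke Peetre-type maximal estimates together with vector-valued Fefferman--Stein inequalities on the sequence spaces $\ell^qL^p_\tau$, $L^p_\tau\ell^q$ and $\ell^qM^p_\tau$ (which is where $p<\infty$ enters in the $\Fs$-case). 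The main obstacle is that these maximal inequalities must respect the truncation $j\ge\max(0,J)$ in the RHS norms: the cube scale $2^{-J}$ has to interact properly with the frequency scale $2^{-j}$, which we handle by splitting into the regimes $j\le J$ and $j>J$ and using a cube-subdivision argument so that the factor $2^{nJ\tau}$ absorbs the bookkeeping. Finally, the special display for $\Fs_{\infty q}^s$ is obtained by specializing the $\Fs_{pq}^{s\tau}$ equivalence to $p=q$, $\tau=1/q$: since $|Q_{J,v}|^{-\tau}=2^{nJ\tau}=2^{Jn/q}$ and the $L^p_\tau\ell^q$ quasi-norm collapses to an $\ell^qL^q$ expression when $p=q$, the stated formula follows upon invoking the known identification $\Fs_{\infty q}^s=\Fs_{qq}^{s,1/q}$.
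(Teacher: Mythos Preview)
Your overall architecture matches the paper's: the upper bound via an arbitrary extension and a generalized Littlewood--Paley characterization on $\R^n$, and the lower bound via Rychkov's operator $\mathcal Ef=\sum_j\psi_j\ast(\1_\Omega\cdot(\phi_j\ast f))$. The paper packages this through an intermediate Peetre-maximal characterization (its Proposition~\ref{Prop::NormMax}) and then reduces Theorem~\ref{Thm::LPNorm} to the pointwise comparison $|\phi_j\ast f|\le\Pc^{\phi,N}_{\Omega,j}f$ together with Proposition~\ref{Prop::PeeEst}.

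However, there is a genuine gap in your lower bound. After almost-orthogonality you arrive at a pointwise control of $\eta_k\ast\mathcal Ef(x)$ by $\sum_j 2^{-N|k-j|}\Pc^{\phi,N}_{\Omega,j}f(x)$ for \emph{all} $x\in\R^n$, and you then want to dominate the Peetre maximal by $\M(|\phi_j\ast f|^\gamma\1_\Omega)^{1/\gamma}$ so that Fefferman--Stein applies. The sub-mean-value inequality that makes this work (Lemma~\ref{Lem::STInq}) is only valid for $x\in\Omega$: its proof rests on the reproducing formula $\phi_j\ast f(z)=\sum_k(\phi_j\ast\psi_k)\ast(\1_\Omega\cdot\phi_k\ast f)(z)$, which uses the cone-support condition and fails for $z\notin\Omega$. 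Since $(\phi_j\ast f)\1_\Omega$ has no Fourier support restriction, the usual Plancherel--Polya route to $\Pc^N\lesssim(\M|\cdot|^\gamma)^{1/\gamma}$ is also unavailable. For $\min(p,q)\le1$ you therefore cannot pass from the $\R^n$-norm of $\mathcal Ef$ to the $\Omega$-localized RHS by the steps you list.

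The paper closes exactly this gap with a geometric device you do not mention: a \emph{fold map} $L:\R^n\to\overline\Omega$ given by reflecting across the graph of $\rho$, which satisfies $|L(x)-y|\lesssim_\Omega|x-y|$ for all $y\in\Omega$. This yields $\Pc^{\phi,N}_{\Omega,j}f(x)\lesssim\Pc^{\phi,N}_{\Omega,j}f(L(x))$ and, since each cube has only boundedly many $L$-preimages among cubes of the same size, converts the $L^p_\tau\ell^q$-norm over $\R^n$ into one carrying the factor $\1_\Omega$. Only then does Proposition~\ref{Prop::PeeEst} (your ``Peetre-type maximal estimates plus Fefferman--Stein'') apply. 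The obstacle you single out, namely the truncation $j\ge\max(0,J)$ interacting with the cube scale, is real but is handled separately by the cube-decomposition lemma (Lemma~\ref{Lem::PeeToHL}) inside the proof of Proposition~\ref{Prop::PeeEst}; it is not the step that bridges $\R^n$ and $\Omega$. Your treatment of the $\Fs_{\infty q}^s$ case via $\Fs_{qq}^{s,1/q}$ is correct.
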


One can also get some characterizations on bounded Lipschitz domain, whose expressions are less elegant however. See Remark \ref{Rmk::RmkBddDom}.

Similar to \cite[Theorem 2.3]{ExtensionLipschitz}, we also have the corresponding characterizations using Peetre maximal functions, see Proposition \ref{Prop::NormMax} and Corollary \ref{Cor::NormMaxP}.

\medskip Our result for \eqref{Eqn::Intro::IntroDerChar} is the following:
\begin{thm}[Equivalent norm characterizations via derivatives]\label{Thm::EqvNorm}
Let $\As\in\{\Bs,\Fs,\Ns\}$, $0<p,q\le\infty$, $s\in\R$ and $\tau\ge0$ ($p<\infty$ for $\Fs$-cases). Let $\Omega\subset\R^n$ be either a special Lipschitz domain or a bounded Lipschitz domain.
Then for any positive integer $m$, the space $\As_{pq}^{s\tau}(\Omega)$ has the following equivalent (quasi-)norm:
\begin{equation}\label{Eqn::Intro::EqvNorm}
    \|f\|_{\As_{p,q}^{s,\tau}(\Omega)}\approx_{p,q,s,m,\tau,\Omega}\sum_{|\alpha|\le m}\|\partial^\alpha f\|_{\As_{p,q}^{s-m,\tau}(\Omega)}.
\end{equation}
In particular $\|f\|_{\Fs_{\infty,q}^{s}(\Omega)}\approx_{q,s,m,\Omega}\sum_{|\alpha|\le m}\|\partial^\alpha f\|_{\Fs_{\infty,q}^{s-m}(\Omega)}$ for all $0<q\le\infty$ and $s\in\R$.
\end{thm}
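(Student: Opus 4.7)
My plan is to reduce both inequalities in \eqref{Eqn::Intro::EqvNorm} to the Littlewood-Paley coefficients via the intrinsic characterization from Theorem~\ref{Thm::LPNorm}; the argument is uniform across $\As\in\{\Bs,\Fs,\Ns\}$ since Theorem~\ref{Thm::LPNorm} treats them on equal footing. For the easy direction $\sum_{|\alpha|\le m}\|\partial^\alpha f\|_{\As^{s-m,\tau}(\Omega)}\lesssim\|f\|_{\As^{s,\tau}(\Omega)}$, the identity $\phi_j\ast(\partial^\alpha f)=2^{j|\alpha|}(\partial^\alpha\phi)_j\ast f$ for $j\ge 1$ gives the pointwise bound $2^{j(s-m)}|\phi_j\ast\partial^\alpha f|\le 2^{js}|(\partial^\alpha\phi)_j\ast f|$ whenever $|\alpha|\le m$. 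Since $\partial^\alpha\phi$ inherits the cone-support of $\phi$, the shifted system $((\partial^\alpha\phi)_j)_{j\ge 0}$ (completed with a compatible $j=0$ function) is again a Littlewood-Paley family in the sense of Definition~\ref{Defn::Intro::Basic}, and two applications of Theorem~\ref{Thm::LPNorm} close this bound.

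For the main direction $\|f\|_{\As^{s,\tau}(\Omega)}\lesssim\sum_{|\alpha|\le m}\|\partial^\alpha f\|_{\As^{s-m,\tau}(\Omega)}$, the crucial step is a lifting decomposition
\[\phi=\sum_{|\alpha|=m}\partial^\alpha\psi^\alpha,\qquad\psi^\alpha\in\mathcal S,\ \supp\psi^\alpha\subset K',\]
where $K'$ is a slight enlargement of the cone containing $\supp\phi$. This is available because $\phi$ has vanishing moments of all orders (a standard feature of Rychkov's LP generator), permitting iterated antiderivatives confined to the cone by a de Rham-type construction. Scaling gives $\phi_j\ast f=\sum_{|\alpha|=m}2^{-jm}\psi^\alpha_j\ast\partial^\alpha f$ for $j\ge 1$, hence $2^{js}|\phi_j\ast f|\le\sum_{|\alpha|=m}2^{j(s-m)}|\psi^\alpha_j\ast\partial^\alpha f|$. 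Applying the $\min(1,p,q)$-triangle inequality in whichever sequence space ($\ell^qL^p_\tau$, $L^p_\tau\ell^q$, or $\ell^qM^p_\tau$) is relevant and invoking Theorem~\ref{Thm::LPNorm} a second time with the LP family $(\psi^\alpha_j)_{j\ge 0}$ bounds the $j\ge 1$ contribution by $\sum_{|\alpha|=m}\|\partial^\alpha f\|_{\As^{s-m,\tau}(\Omega)}$. The $j=0$ piece $\|\phi_0\ast f\|_{L^p(\Omega\cap Q)}$ is separately dominated by $\|f\|_{\As^{s-m,\tau}(\Omega)}$ (the $|\alpha|=0$ contribution), again via Theorem~\ref{Thm::LPNorm}. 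The bounded Lipschitz case reduces to the special Lipschitz one by a finite partition of unity flattening each boundary chart, cross-terms being handled by the Leibniz rule.

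The principal obstacle is the lifting decomposition with cone-preserving support: one must verify that $\psi^\alpha$ is compactly supported in an admissible cone for $\Omega$, and that $(\psi^\alpha_j)_{j\ge 0}$, augmented with a judicious low-frequency term at $j=0$, genuinely satisfies the hypotheses of Definition~\ref{Defn::Intro::Basic}. Quasi-norm complications when $\min(p,q)<1$ are routine and absorbed by the $\min(1,p,q)$-triangle inequality.
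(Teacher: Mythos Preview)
Your strategy shares the essential idea with the paper: the hard direction rests on the lifting decomposition $\phi_j=2^{-jm}\sum_{|\alpha|=m}\partial^\alpha\tilde\phi^\alpha_j$ for $j\ge 1$ (which the paper imports from \cite[Theorem~1.5(ii)]{ShiYaoExt}), and the bounded-domain case is the same partition-of-unity reduction. The packaging differs: the paper works through the extension operator, writes $E_\Omega f=S_0f+\sum_{|\alpha|=m}S_\alpha(\partial^\alpha f)$, and invokes Proposition~\ref{Prop::Boundedness} for $S_\alpha:\As_{pq}^{s-m,\tau}(\Omega)\to\As_{pq}^{s,\tau}(\R^n)$; for the easy direction it simply cites the known $\R^n$ estimate and takes the infimum over extensions, bypassing Theorem~\ref{Thm::LPNorm} entirely.

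There is, however, a genuine gap in your route. You assert that the auxiliary systems $((\partial^\alpha\phi)_j)_j$ and $(\psi^\alpha_j)_j$ can be ``completed with a compatible $j=0$ function'' so as to satisfy Definition~\ref{Defn::Intro::Basic}, and then apply Theorem~\ref{Thm::LPNorm} to them. But condition \ref{Item::LPCond::AppId} forces $\hat\theta_0(\xi)=1-\sum_{j\ge1}\hat\theta_1(2^{1-j}\xi)$; when $\theta_1=\partial^\alpha\phi_1$ the extra factor $(2\pi i\,2^{1-j}\xi)^\alpha$ destroys the telescoping that makes this work for $\phi$ itself, and the sum has a nontrivial degree-zero homogeneous limit as $|\xi|\to\infty$, so $\hat\theta_0$ does not decay and no Schwartz completion exists. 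The same obstruction hits $\psi^\alpha$. What you actually need is only the \emph{one-sided} bound $\|(2^{js}\1_\Omega\cdot(\theta_j\ast g))_j\|\lesssim\|g\|_{\As_{pq}^{s\tau}(\Omega)}$ for $\theta$ satisfying just \ref{Item::LPCond::Momt}, \ref{Item::LPCond::Scal}, \ref{Item::LPCond::Supp}; this follows from Proposition~\ref{Prop::PeeEst} (using $|\theta_j\ast g|\le\Pc^{\theta,N}_{\Omega,j}g$ on $\Omega$) together with Proposition~\ref{Prop::NormMax} applied to a fixed genuine family $\phi$, and never requires \ref{Item::LPCond::AppId} for $\theta$. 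The paper's operator formulation via Proposition~\ref{Prop::Boundedness} sidesteps the issue for the same reason: that proposition asks only \ref{Item::LPCond::Momt}, \ref{Item::LPCond::Scal}, \ref{Item::LPCond::Supp} of the inner family.
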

The Besov-Morrey case $\As=\Ns$ of Theorem \ref{Thm::EqvNorm} was stated in \cite[Proposition 4.15]{YSYIntBesovMorrey}.  However, the key step in their proof requires \cite[(4.70)]{TriebelWavelet} (see \cite[Remark 4.14]{YSYIntBesovMorrey}), which cannot be achieved.
\begin{rem}\label{Rmk::RmkEqvNorm}
In the proof of \cite[Proposition 4.21]{TriebelWavelet}, Triebel claimed the following statement: 
\begin{equation}\label{Eqn::EqvNormRmk}
    \|f\|_{\As_{pq}^s(\Omega)}\approx\| Ef\|_{\As_{pq}^s(\R^n)}\approx\sum_{|\alpha|\le m}\|\partial^\alpha E f\|_{\As_{pq}^s(\R^n)}=\sum_{|\alpha|\le m}\|E\partial^\alpha  f\|_{\As_{pq}^s(\R^n)}\lesssim\sum_{|\alpha|\le m}\|\partial^\alpha  f\|_{\As_{pq}^s(\Omega)}.
\end{equation}
Here $E=E_\Omega$ is an extension operator which is bounded on $\As_{pq}^s(\Omega)\to\As_{pq}^s(\R^n)$ and $\As_{pq}^{s-m}(\Omega)\to\As_{pq}^{s-m}(\R^n)$.

However, the commutativity $\partial^\alpha\circ E=E\circ\partial^\alpha$ in \eqref{Eqn::EqvNormRmk} (see \cite[(4.70)]{TriebelWavelet}) cannot be achieved. In \cite[Section 1.2]{ShiYaoExt} we borrowed some facts from several complex variables to show that $\partial^\alpha\circ E=E\circ\partial^\alpha$ can never be true: if it is true (even locally) then $\overline{\partial}$-equation for $\Omega$ can gain 1 derivative.
To prove Theorem \ref{Thm::EqvNorm} (also to fix the proof of \cite[Proposition 4.15]{YSYIntBesovMorrey}), simply using the boundedness of $E_\Omega$ is not enough.

By observing \eqref{Eqn::EqvNormRmk} more carefully, the argument still works if $\partial^\alpha\circ E=E^\alpha\circ\partial^\alpha$ hold for some extension operators $E^\alpha:\As^{s-m}_{pq}(\Omega)\to\As^{s-m}_{pq}(\Omega)$. This can be done if $E$ is the standard half space extension\footnote{The half space extension works on $\R^n_+=\{x_n>0\}$. It has the form $Ef(x',x_n)=\sum_ja_jf(x',-b_jx_n)$ when $x_n<0$. In this case $E^\alpha f(x',x_n)=\sum_ja_j(-b_j)^{\alpha_n}f(x',-b_jx_n)$ has the similar expression to $E$.}. Using the operators $E^\alpha$ Triebel proved the equivalent norms via derivatives for $\R^n_+$ and for smooth domains, see \cite[Section 3.3.5]{TriebelTheoryOfFunctionSpacesI}.

In our case $E$ is Rychkov's extension operator (see \eqref{Eqn::ExtOp}). Even on special Lipschitz domain, it is not known to the author whether $\partial^\alpha\circ E=E^\alpha\circ\partial^\alpha$ can be achieved (which in general should have the form \eqref{Eqn::ThmBdd::DefT}). Nevertheless, a weaker form $\partial^\alpha\circ E=\sum_\beta E^{\alpha,\beta}\circ\partial^\beta$ is enough to fix \eqref{Eqn::EqvNormRmk}. In the proof we introduce $E^{\alpha,\beta}$ in \eqref{Eqn::EAlphaBeta} and get the proof using \eqref{Eqn::EqvNormPfKey}.

See also \cite[Section 2.2 and Remark 6.5]{ShiYaoExt}.
\end{rem}

\section{Function Spaces and Notations}

Let $U\subseteq\R^n$ be an open set, we define $\Ss'(U)$ to be the space of restricted tempered distributions:\\ $\Ss'(U):=\{\tilde f|_U:\tilde f\in\Ss'(\R^n)\}$. See also \cite[Proposition 3.1]{ExtensionLipschitz}.

We use the notation $A \lesssim B$ to mean that $A \leq CB$ where $C$ is a constant independent of $A,B$. We use $A \approx B$ for ``$A \lesssim B$ and $B \lesssim A$''. And we use $A\lesssim_xB$ to emphasize that the constant depends on the quantity $x$.

When $p$ or $q<1$, we use ``norms'' (for  $\As_{pq}^{s\tau}$ etc.) as the abbreviation to the usual ``quasi-norms''. 

\medskip
In the paper we use the following Littlewood-Paley family, whose elements do not have compact supports in the Fourier side. It is crucially useful in the construction of Rychkov's extension operator.
\begin{defn}\label{Defn::Intro::Basic}
Let $\Omega=\{x_n>\rho(x')\}$ be a special Lipschitz domain, a \textbf{Littlewood-Paley family} associated with $\Omega$ is a sequence $\phi=(\phi_j)_{j=0}^\infty\subset\Ss(\R^n)$ of Schwartz functions that satisfies the following:
\begin{enumerate}[label=(P.\alph*)]
\item\label{Item::LPCond::Momt} \textit{Moment condition}: $\int x^\alpha\phi_1(x)dx=0$ for all multi-indices $\alpha\in\Z_{\ge0}^n$.
    \item\label{Item::LPCond::Scal} \textit{Scaling condition}: $\phi_j(x)=2^{(j-1)n}\phi_1(2^{j-1}x)$ for all $j\ge2$.
	
	\item\label{Item::LPCond::AppId} \textit{Approximate identity}: $\sum_{j=0}^\infty\phi_j=\delta_0$ is the Dirac delta measure.
	\item\label{Item::LPCond::Supp} \textit{Support condition}: $\supp\phi_j\subset\{(x',x_n): x_n<-\|\nabla\rho\|_{L^\infty}\cdot|x'|\}$ for all $j\ge0$.
\end{enumerate}
\end{defn}

In the paper we use the sequence spaces $\ell^qL^p_\tau$, $L^p_\tau \ell^q$, $\ell^qM^p_\tau$ given by the following: 

\begin{defn}\label{Defn::Intro::SeqSpaces}
Let $0<p,q\le\infty$ and $\tau\ge0$. We denote by $\ell^qL^p_\tau(\R^n)$ and $L^p_\tau\ell^q(\R^n)$ the spaces of vector valued measurable functions $(f_j)_{j=0}^\infty\subset L^p_\loc(\R^n)$ such that the following (quasi-)norms are finite respectively:
\begin{align}\notag
    \|(f_j)_{j=0}^\infty\|_{\ell^qL^p_\tau}&:=\sup_{Q_{J,v}\in\Qc}2^{nJ\tau}\|(f_j)_{j=\max(0,J)}^\infty\|_{\ell^q(L^p(Q_{J,v}))}=\sup_{J\in\Z,v\in\Z^n}2^{nJ\tau}\Big(\sum_{j=\max(0,J)}^\infty\|f_j\|_{L^p(Q_{J,v})}^q\Big)^\frac1q;
    \\\notag
    \|(f_j)_{j=0}^\infty\|_{L^p_\tau\ell^q}&:=\sup_{Q_{J,v}\in\Qc}2^{nJ\tau}\|(f_j)_{j=\max(0,J)}^\infty\|_{L^p(Q_{J,v};\ell^q)}=\sup_{J\in\Z,v\in\Z^n}2^{nJ\tau}\bigg(\int_{Q_{J,v}}\Big(\sum_{j=\max(0,J)}^\infty|f_j(x)|^q\Big)^\frac pqdx\bigg)^\frac1p.
\end{align}

We define the Morrey space\footnote{Our notation is different from the standard one, which can be found in for example \cite[Definition 2.1]{TangXuMorrey}.} $M^p_\tau(\R^n)$ to be the set of all $f\in L^p_\loc(\R^n)$ whose (quasi-)norm below is finite:
\begin{equation*}
    \|f\|_{M^p_\tau}:=\textstyle\sup_{Q_{J,v}\in\Qc}2^{nJ\tau}\|f\|_{L^p(Q_{J,v})}.
\end{equation*}

We define $\ell^qM^p_\tau(\R^n):=\ell^q(\Z_{\ge0};M^p_\tau(\R^n))$ with $\|(f_j)_{j=0}^\infty\|_{\ell^qM^p_\tau}:=\big(\sum_{j=0}^\infty\|f_j\|_{M^p_\tau(\R^n)}^q\big)^\frac1q$.
\end{defn}

Our Besov-type spaces $\Bs_{pq}^{s\tau}$, Triebel-Lizorkin-type spaces $\Fs_{pq}^{s\tau}$ and Besov-Morrey spaces $\Ns_{pq}^{s\tau}$ are given by the following:
\begin{defn}\label{Defn::Intro::DefMorrey}
Let $\lambda=(\lambda_j)_{j=0}^\infty$ be a sequence of Schwartz functions satisfying:
\begin{enumerate}[label=(P.\alph*')]
    \item\label{Item::CLPCond::Four} The Fourier transform $\hat\lambda_0(\xi)=\int_{\R^n}\lambda_0(x)2^{-2\pi ix\xi}dx$ satisfies $\supp\hat\lambda_0\subset\{|\xi|<2\}$ and $\hat\lambda_0|_{\{|\xi|<1\}}\equiv1$.
    \item\label{Item::CLPCond::Scal}  $\lambda_j(x)=2^{jn}\lambda_0(2^jx)-2^{(j-1)n}\lambda_0(2^{j-1}x)$ for $j\ge1$.
\end{enumerate}

Let $0<p,q\le\infty$, $s\in\R$ and $\tau\ge0$ ($p<\infty$ for $\Fs$-cases). We define the \textit{Besov-type Morrey space} $\Bs_{pq}^{s\tau}(\R^n)$, the \textit{Triebel-Lizorkin-type Morrey space} $\Fs_{pq}^{s\tau}(\R^n)$ and the \textit{Besov-Morrey space} $\Ns_{pq}^{s\tau}(\R^n)$, to be the sets of all tempered distributions $f\in\Ss'(\R^n)$ such that the following norms are finite, respectively:
\begin{gather}\label{Eqn::DefMorrey::MrNorm}
\|f\|_{\Bs_{pq}^{s\tau}(\R^n)}:=\|(2^{js}\lambda_j\ast f)_{j=0}^\infty\|_{\ell^qL^p_\tau};\ \|f\|_{\Fs_{pq}^{s\tau}}:=\|(2^{js}\lambda_j\ast f)_j\|_{L^p_\tau\ell^q};\
\|f\|_{\Ns_{pq}^{s\tau}}:=\|(2^{js}\lambda_j\ast f)_j\|_{\ell^qM^p_\tau}.
\end{gather}

Let $\As\in\{\Bs,\Fs,\Ns\}$. For an (arbitrary) open subset $U\subseteq\R^n$, we define $\As_{pq}^{s\tau}(U):=\{\tilde f|_U:\tilde f\in\As_{pq}^{s\tau}(\R^n)\}$ ($p<\infty$ for $\Fs$-cases) with the norm
\begin{equation}\label{Eqn::Intro::NormDomain}
    \textstyle\|f\|_{\As_{pq}^{s\tau}(U)}:=\inf\{\|\tilde f\|_{\As_{pq}^{s\tau}(\R^n)}:\tilde f\in\As_{pq}^{s\tau}(\R^n), \tilde f|_U=f\}.
\end{equation}
\end{defn}

The definitions of the spaces $\As_{pq}^{s\tau}(U)$ do not depend on the choice of $(\lambda_j)_{j=0}^\infty$ which satisfies  \ref{Item::CLPCond::Four} and \ref{Item::CLPCond::Scal}. See \cite[Page 39, Corollary 2.1]{YSYMorrey} and \cite[Theorem 2.8]{TangXuMorrey}. 

\begin{rem}\label{Rmk::Spaces}
We remark some known results and different notations for these spaces in $\R^n$ from the literature:

\begin{enumerate}[(i)]
    \item Clearly $\Bs_{pq}^s(\R^n)=\Bs_{pq}^{s0}(\R^n)=\Ns_{pq}^{s0}(\R^n)$ and $\Fs_{pq}^{s0}(\R^n)=\Fs_{pq}^s(\R^n)$ (provided $p<\infty$).
    
    \item In applications only $0\le\tau\le\frac1p$ is interesting: by \cite[Theorem 2]{YangYuanMorreyEquiv} and \cite[Lemma 3.4]{SickelMorrey1},
\begin{equation}\label{Eqn::Intro::EqvSpaces}
    \Bs_{p,q}^{s,\tau}(\R^n)=\Fs_{p,q}^{s,\tau}(\R^n)=\Bs_{\infty,\infty}^{s+n(\tau-\frac1p)}(\R^n),\quad\Ns_{p,q}^{s,\tau}(\R^n)=\{0\},\quad\forall\,0<p,q\le\infty,\ s\in\R,\ \tau>\tfrac1p.
\end{equation}

\item For the case $\tau=1/p$, by \cite[Theorem 2]{YangYuanMorreyEquiv} and \cite[Remark 11(ii)]{SickelMorrey1},
\begin{equation*}
    \Bs_{p,\infty}^{s,\frac1p}(\R^n)=\Fs_{p,\infty}^{s,\frac1p}(\R^n)=\Bs_{\infty,\infty}^{s}(\R^n),\quad\Ns_{p,q}^{s,\frac1p}(\R^n)=\Bs_{\infty,q}^s(\R^n),\quad\forall\,0<p,q\le\infty,\ s\in\R.
\end{equation*}

\item Although $\Fs_{pq}^{s\tau}$-spaces are only defined for $p<\infty$, we have a description for $\Fs_{\infty q}^s$-spaces as the following (see \cite[Page 41, Proposition 2.4(iii)]{YSYMorrey} and \cite[Section 5]{FrazierJawerthFInftyQ}):
\begin{equation}\label{Eqn::Intro::CharFInftyQ}
    \Fs_{\infty q}^s(\R^n)=\Fs_{p,q}^{s,\frac1p}(\R^n)=\Bs_{q,q}^{s,\frac1q}(\R^n),\quad\forall\,0<p<\infty,\ 0<q\le\infty,\ s\in\R.
\end{equation}

\item Our notation $\Ns_{pq}^{s\tau}$ corresponds to the $\mathcal B_{pq}^{s\tau}$ in \cite[Definition 5]{SickelMorrey1}. For the classical notations\footnote{Some papers may have different order of the indices. For example, in \cite{MazzucatoBesovMorrey} this is written as $\Nc_{upq}^s$.} $\Nc_{uqp}^s$ we have correspondence (see  \cite[Remark 13(iii)]{SickelMorrey1} for example):
\begin{equation*}
    \Nc_{u,q,p}^s(\R^n)=\Ns_{p,q}^{s,\frac1p-\frac1u}(\R^n),\quad\forall\,0<p\le u\le\infty,\ 0<q\le\infty,\ s\in\R.
\end{equation*}

\item We do not talk about the \textit{Triebel-Lizorkin-Morrey spaces} $\Ec_{uqp}^s$ in the paper, because they are special cases of the Triebel-Lizorkin-type spaces: we have $\Ec_{u,q,p}^s(\R^n)=\Fs_{p,q}^{s,1/p-1/u}(\R^n)$ for all $p\in(0,\infty)$, $q\in(0,\infty]$, $u\in[p,\infty]$ and $s\in\R$. See \cite[Corollary 3.3]{YSYMorrey}.

\item There are also papers that use the notations $\Lambda^\varrho\As_{pq}^s$ and $\Lambda_{\varrho}\As_{pq}^s$ for $\As\in\{\Bs,\Fs\}$ and $-n\le\varrho\le0$ ($p<\infty$ for $\Fs$-cases), for example \cite{TriebelTheoryOfFunctionSpacesIV,HaroskeTriebelMorrey}. These spaces describe the same collection to $\As_{pq}^{s\tau}$ for $\As\in\{\Bs,\Fs,\Ns\}$, see \cite[Remarks 2.7 and 2.9]{HaroskeTriebelMorrey} for example.

\end{enumerate}

For more discussions, we refer the reader to \cite{YSYMorrey,TriebelHybrid,HaroskeTriebelMorrey}.
\end{rem}

\section{Proof of the Theorems}

Our proof follows from some results in \cite{ExtensionLipschitz} and \cite{YangYuan10}. 

The key ingredient is the \textbf{Peetre maximal operators} introduced in \cite{Peetre}.
\begin{defn}
Let $N>0$, $U\subseteq\R^n$ be an open set and let $\eta=(\eta_j)_{j=0}^\infty$ be a sequence of Schwartz functions. The associated \textit{Peetre maximal operators} $(\Pc^{\eta,N}_{U,j})_{j=0}^\infty$ are given by
\begin{equation*}
    \Pc^{\eta,N}_{U,j}f(x):=\sup\limits_{y\in U}\frac{|\eta_j\ast f(y)|}{(1+2^j|x-y|)^{N}},\quad f\in\Ss'(\R^n),\quad x\in\R^n,\quad j\ge0.
\end{equation*} 
\end{defn}

\begin{lemma}\label{Lem::ExistPsi}
Let $\phi=(\phi_j)_{j=0}^\infty$ be a Littlewood-Paley family associated with a special Lipschitz domain $\Omega$ (see Definition \ref{Defn::Intro::Basic}). Then there is a $\psi=(\psi_j)_{j=0}^\infty\subset\Ss'(\R^n)$ satisfying \ref{Item::LPCond::Momt} and \ref{Item::LPCond::Scal} such that $(\psi_j\ast \phi_j)_{j=0}^\infty$ is also associated with $\Omega$.
\end{lemma}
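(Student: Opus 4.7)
The plan is to reduce the lemma to a discrete Calder\'on reproducing formula with support control, which is essentially the content of \cite[Proposition~2.1]{ExtensionLipschitz}. I would use the ansatz $\psi_j(x):=2^{(j-1)n}\psi_1(2^{j-1}x)$ for $j\ge 2$, with $\psi_0,\psi_1\in\Ss(\R^n)$ still to be constructed, so that \ref{Item::LPCond::Scal} for $\psi$ itself holds by definition. If in addition $\supp\psi_0,\supp\psi_1\subset\overline{\mathscr{C}}$ and $\int x^\alpha\psi_1\,dx=0$ for all multi-indices $\alpha$, where $\mathscr{C}:=\{x_n<-\|\nabla\rho\|_{L^\infty}|x'|\}$, then three of the four defining properties of $(\psi_j*\phi_j)_{j=0}^\infty$ are automatic from elementary facts about convolution. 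The scaling \ref{Item::LPCond::Scal} follows from the single change of variables $(\psi_j*\phi_j)(x)=2^{(j-1)n}(\psi_1*\phi_1)(2^{j-1}x)$ for $j\ge 2$; the moment condition \ref{Item::LPCond::Momt} follows from the binomial identity $\int x^\alpha(\psi_1*\phi_1)(x)\,dx=\sum_{\beta\le\alpha}\binom{\alpha}{\beta}(\int y^\beta\psi_1\,dy)(\int z^{\alpha-\beta}\phi_1\,dz)$ together with the vanishing of every moment of $\phi_1$; and the support condition \ref{Item::LPCond::Supp} follows from $\supp(\psi_j*\phi_j)\subset\supp\psi_j+\supp\phi_j$ and the convexity of the cone, which gives $\overline{\mathscr{C}}+\mathscr{C}\subset\mathscr{C}$.

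The only substantive condition is the approximate identity \ref{Item::LPCond::AppId}, $\sum_{j=0}^\infty\psi_j*\phi_j=\delta_0$, which on the Fourier side reads $\hat\psi_0(\xi)\hat\phi_0(\xi)+\sum_{k=0}^\infty\hat\psi_1(2^{-k}\xi)\hat\phi_1(2^{-k}\xi)=1$ for $\xi\in\R^n$. Using the normalization $\hat\phi_0(0)=\int\phi_0=1$ and Borel's theorem I would first pick a Schwartz $\hat\psi_0$ whose Taylor series at $0$ equals that of $1/\hat\phi_0$, so that the defect $M(\xi):=1-\hat\psi_0(\xi)\hat\phi_0(\xi)$ vanishes to infinite order at $0$. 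The telescoping $M(\xi)=\sum_{k\ge 0}\bigl(M(2^{-k}\xi)-M(2^{-(k+1)}\xi)\bigr)$ then reduces the problem to producing $\psi_1$ with $\hat\psi_1(\xi)\hat\phi_1(\xi)=M(\xi)-M(\xi/2)=\hat\psi_0(\xi/2)\hat\phi_0(\xi/2)-\hat\psi_0(\xi)\hat\phi_0(\xi)$, an honest Schwartz function vanishing to infinite order at $0$ (matched with the infinite-order vanishing of $\hat\phi_1$ at $0$, so the quotient is smooth near the origin).

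The main obstacle is that at this step one must simultaneously secure the support of $\psi_1$ in the closed cone $\overline{\mathscr{C}}$; on the Fourier side this forces a Paley--Wiener-type growth condition on $\hat\psi_1$ in complex directions transverse to the cone, which is not automatic from the pointwise equation $\hat\psi_1\hat\phi_1=M-M(\cdot/2)$, and in particular a naive division by $\hat\phi_1$ fails because $\hat\phi_1$ may vanish away from the origin. This is exactly the difficulty resolved in \cite[Proposition~2.1]{ExtensionLipschitz}, where $\psi_0$ and $\psi_1$ are built directly on the physical side as convergent series of convolutions of $\phi_0,\phi_1$ and their dyadic rescalings, exploiting the convexity of $\mathscr{C}$ and the infinite-order moment condition on $\phi_1$ to keep all iterates inside $\overline{\mathscr{C}}$ while closing the reproducing identity. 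I would simply invoke that construction to conclude.
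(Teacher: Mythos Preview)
Your proposal is correct in that it ultimately defers to \cite[Proposition~2.1]{ExtensionLipschitz}, which is precisely what the paper does as well. The reductions you spell out (scaling, moments, support via $\overline{\mathscr{C}}+\mathscr{C}\subset\mathscr{C}$) are all valid, and you correctly identify that the only real content is the simultaneous enforcement of the reproducing identity and the cone support.

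That said, the paper's proof is more explicit and shows that your detour through Borel's theorem is unnecessary. The paper simply writes down closed-form polynomial expressions in $\hat\phi_0$:
\[
\hat\psi_0(\xi)=2\hat\phi_0(\xi)-\hat\phi_0(\xi)^3,\qquad \hat\psi_1(\xi)=\bigl(\hat\phi_0(\xi/2)+\hat\phi_0(\xi)\bigr)\bigl(2-\hat\phi_0(\xi/2)^2-\hat\phi_0(\xi)^2\bigr),
\]
chosen so that $\hat\psi_j\hat\phi_j=g(\hat\phi_0(2^{-j}\xi))-g(\hat\phi_0(2^{1-j}\xi))$ with $g(t)=2t^2-t^4$, $g(1)=1$, and the identity $\sum_j\hat\psi_j\hat\phi_j=1$ follows by telescoping. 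On the physical side these are \emph{finite} linear combinations of convolutions of $\phi_0$ and its dilate $2^n\phi_0(2\cdot)$, so the cone support is immediate. Your description of Rychkov's construction as ``convergent series of convolutions'' is slightly off; the point is that a clever algebraic choice of the polynomial $g$ avoids any infinite series or division by $\hat\phi_1$ altogether. The Borel step you propose is not wrong but is a red herring: it neither helps with the support constraint (as you note) nor is it needed, since the explicit formula already matches $1/\hat\phi_0$ to infinite order at the origin automatically from the vanishing moments of $\phi_0$.
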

\begin{proof}The assumptions $\phi_j(x)=2^{(j-1)n}\phi_1(2^{j-1}x)$ for $j\ge1$ and $\sum_{j=0}^\infty\phi_j=\delta_0$ imply $\phi_1(x)=2^n\phi_0(2x)-\phi_0(x)$, i.e. $\hat\phi_1(\xi)=\hat\phi_0(\xi/2)-\hat\phi_0(\xi)$.
    We can take $\psi=(\psi_j)_{j=0}^\infty$ via the Fourier transforms: 
    $$ \hat\psi_0(\xi):=2\hat \phi_0(\xi)-\hat\phi_0(\xi)^3;\qquad\hat\psi_j(\xi):=(\hat\phi_0(2^{-j}\xi)+\hat\phi_0(2^{1-j}\xi))(2-\hat\phi_0(2^{-j}\xi)^2-\hat\phi_0(2^{1-j}\xi)^2),\text{ for }j\ge1.$$ See \cite[Proposition 2.1]{ExtensionLipschitz} for details.
\end{proof}

\begin{lemma}[{\cite[Lemma 2.1]{Bui}}]\label{Lem::Heideman}
Let $\eta=(\eta_j)_{j=0}^\infty$ and $\theta=(\theta_j)_{j=0}^\infty\subset\Ss(\R^n)$ both satisfy conditions \ref{Item::LPCond::Momt} and \ref{Item::LPCond::Scal}. Then for any $N>0$ there exists a $C=C(\eta,\theta,N)>0$ such that\begin{equation*}
    \int_{\R^n}|\eta_j\ast\theta_k(x)|(1+2^k|x|)^Ndx\lesssim_{\eta,\theta,N}2^{-N|j-k|},\qquad\forall j,k\ge0.
\end{equation*}
\end{lemma}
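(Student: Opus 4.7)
The plan is to exploit the moment conditions \ref{Item::LPCond::Momt} of $\eta$ and $\theta$ via Taylor expansion to extract the factor $2^{-N|j-k|}$, and then use Schwartz decay of the functions to handle the weighted spatial integral. Conditions \ref{Item::LPCond::Momt} and \ref{Item::LPCond::Scal} together imply that every $\eta_j$ and $\theta_k$ with $j,k\ge1$ has vanishing moments of all orders, so the core case is $j,k\ge1$; the boundary cases where $j$ or $k$ equals $0$ are handled as minor variants.

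First, assuming $j\ge k\ge1$, I would write
\[(\eta_j\ast\theta_k)(x)=\int_{\R^n}\theta_k(x-y)\,\eta_j(y)\,dy\]
and Taylor expand $y\mapsto\theta_k(x-y)$ about $0$ to order $M$, with $M$ to be chosen large. The polynomial terms are annihilated by the moments of $\eta_j$, leaving a remainder satisfying
\[|R_M(x,y)|\lesssim|y|^M\sup_{t\in[0,1]}2^{k(n+M)}(1+2^k|x-ty|)^{-L}\]
for any $L$ (Schwartz decay of $\theta_k$ at scale $2^{-k}$). Pairing this with $|\eta_j(y)|\lesssim2^{jn}(1+2^j|y|)^{-L'}$, which yields $\int|\eta_j(y)|\,|y|^M\,dy\lesssim2^{-jM}$, and splitting the $y$-integral at $|y|\le|x|/2$ versus $|y|>|x|/2$ (to control $|x-ty|$ from below by $|x|$ on the main region), one arrives at
\[|(\eta_j\ast\theta_k)(x)|\lesssim_{L,M}2^{-(j-k)M}\cdot2^{kn}(1+2^k|x|)^{-L}.\]
Choosing $L>N+n$ and $M=N$, then integrating against $(1+2^k|x|)^N\,dx$, gives the lemma in this case.

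For the reversed case $k>j\ge1$, one swaps the roles: Taylor expand $\eta_j(x-y)$ about $y=0$ and use the moments of $\theta_k$, which supplies the factor $2^{-(k-j)M}$; the weight $(1+2^k|x|)^N$ is again absorbed by the Schwartz decay of $\theta_k$ at scale $2^{-k}$. The boundary cases where $j=0$ or $k=0$ (but not both) are handled by Taylor-expanding the index-$0$ factor and using the moments of the other; the case $j=k=0$ is trivial since then $|j-k|=0$ and $\eta_0\ast\theta_0$ is itself Schwartz.

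The chief obstacle will be organizing the $y$-integral splitting cleanly so that the Schwartz tail of the factor carrying the weight scale (here $\theta_k$) is not spoiled when $|y|$ is large, and matching the decay exponent $L$ so as not to lose powers of $2^{\max(j,k)}$. Once this bookkeeping is arranged, the rest is a standard moment-plus-Taylor computation, and the symmetry of the argument makes the two regimes $j\ge k$ and $k\ge j$ essentially parallel.
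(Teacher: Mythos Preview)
The paper does not give its own proof of this lemma; it is quoted from \cite[Lemma~2.1]{Bui} without argument. Your Taylor-expansion-plus-moments approach is the standard one and is essentially correct.

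One point needs adjustment. In the regime $k>j\ge1$, after Taylor expanding $\eta_j(x-y)$ and using the moments of $\theta_k$, the pointwise decay of the convolution is governed by the coarser scale $2^{-j}$, not $2^{-k}$: you obtain
\[
|(\eta_j\ast\theta_k)(x)|\lesssim 2^{-(k-j)M}\,2^{jn}(1+2^j|x|)^{-L},
\]
and there is no way to upgrade this to $(1+2^k|x|)^{-L}$, since a convolution of bumps at scales $2^{-j}$ and $2^{-k}$ lives at the coarser scale $2^{-\min(j,k)}=2^{-j}$. Consequently the weight satisfies $(1+2^k|x|)^N\le 2^{(k-j)N}(1+2^j|x|)^N$ and costs a factor $2^{(k-j)N}$ upon integration; the assertion that it is ``absorbed by the Schwartz decay of $\theta_k$ at scale $2^{-k}$'' is not correct on the main region $|y|\le|x|/2$, where the spatial decay comes from $\partial^\alpha\eta_j$. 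The fix is painless: since all moments of $\theta_k$ vanish, take $M\ge 2N$ (rather than $M=N$) in this regime, so that $2^{-(k-j)M}\cdot 2^{(k-j)N}\le 2^{-(k-j)N}$. With that correction your sketch goes through.
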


\begin{lemma}\label{Lem::ASumLem}
Let $0<p,q\le\infty$, $\tau\ge0$ and $\delta>n\tau$. There is a $C=C(n,p,q,\tau,\delta)>0$ such that for every $(g_j)_{j=0}^\infty\subset L^p_\loc(\R^n)$,
\begin{align}\label{Eqn::ASumLem::Bs}
    \Big\|\Big(\sum_{k\ge0}2^{-\delta|j-k|}g_k\Big)_{j=0}^\infty\Big\|_{\ell^qL^p_\tau}&\le C\|(g_j)_{j=0}^\infty\|_{\ell^qL^p_\tau};
    \\\label{Eqn::ASumLem::Fs}
    \Big\|\Big(\sum_{k\ge0}2^{-\delta|j-k|}g_k\Big)_{j=0}^\infty\Big\|_{L^p_\tau\ell^q}&\le C\|(g_j)_{j=0}^\infty\|_{L^p_\tau\ell^q},\qquad\text{provided }p<\infty;
     \\\label{Eqn::ASumLem::Ns}
    \Big\|\Big(\sum_{k\ge0}2^{-\delta|j-k|}g_k\Big)_{j=0}^\infty\Big\|_{\ell^qM^p_\tau}&\le C\|(g_j)_{j=0}^\infty\|_{\ell^q M^p_\tau}.
\end{align}
\end{lemma}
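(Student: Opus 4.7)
My plan is to reduce all three estimates to a standard scalar $\ell^q$-convolution bound plus a geometric-series estimate that crucially uses the strict inequality $\delta>n\tau$. Fix a dyadic cube $Q=Q_{J,v}\in\Qc$ entering the outer supremum of the relevant norm and split the inner sum as
\[
\sum_{k\ge0}2^{-\delta|j-k|}g_k \ =\ \sum_{k\ge\max(0,J)}2^{-\delta|j-k|}g_k\ +\ \sum_{0\le k<J}2^{-\delta|j-k|}g_k,
\]
where the second (``below-level'') piece is nontrivial only for $J\ge1$. I bound the two pieces separately and take $\sup_Q$ at the end.

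The on-level piece is handled by the elementary bound
\[
\Big\|\Big(\sum_{k\ge k_0}2^{-\delta|j-k|}a_k\Big)_{j\ge k_0}\Big\|_{\ell^q}\ \lesssim_{\delta,q}\ \|(a_k)_{k\ge k_0}\|_{\ell^q},
\]
valid for every $0<q\le\infty$ as long as $\delta>0$ (Young's inequality for $q\ge1$; the $q$-quasi-triangle inequality for $q<1$). I apply it with $a_k=\|g_k\|_{L^p(Q)}$ and a ($p$-quasi-)triangle on $L^p(Q)$ for \eqref{Eqn::ASumLem::Bs}; pointwise in $x\in Q$ with $a_k=|g_k(x)|$ followed by taking the $L^p(Q;\ell^q)$-norm for \eqref{Eqn::ASumLem::Fs}; and with $a_k=\|g_k\|_{M^p_\tau}$ for \eqref{Eqn::ASumLem::Ns}, using that $M^p_\tau$ is itself a ($p$-quasi-)norm. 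Multiplying by $2^{nJ\tau}$ and taking $\sup_Q$ reproduces the target norm of $(g_k)$.

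For the below-level piece ($J\ge1$), the geometric observation is that every $Q=Q_{J,v}$ sits inside a unique dyadic ancestor $Q^{(k)}\in\Qc$ at each coarser level $k<J$, so $\|g_k\|_{L^p(Q)}\le\|g_k\|_{L^p(Q^{(k)})}$, and the single-term estimate built into each of the three norms on the right-hand side gives $2^{nk\tau}\|g_k\|_{L^p(Q^{(k)})}\le\|(g_l)\|$. Writing $j-k=(j-J)+(J-k)$ and $2^{nJ\tau}=2^{n(J-k)\tau}\cdot 2^{nk\tau}$, the $k$-th summand is dominated by $2^{-\delta(j-J)}\cdot 2^{-(\delta-n\tau)(J-k)}\|(g_l)\|$. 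The $k$-sum therefore produces the geometric series $\sum_{m\ge1}2^{-(\delta-n\tau)m}$, which converges \emph{precisely} because $\delta>n\tau$; the remaining $j$-sum over $j\ge J$ of $2^{-\delta(j-J)}$-type terms is geometric as well. The chief bookkeeping obstacle is the case $\min(p,q)<1$: every triangle/Minkowski step must be replaced by its $\sigma$-quasi-triangle version with $\sigma=\min(1,p,q)$, which alters the exponent on $2^{-\delta|\cdot|}$ by a factor of $\sigma$, but since $\delta-n\tau>0$ is strict the critical exponent $(\delta-n\tau)\sigma$ remains positive and the geometric sum still converges, so the structure of the argument is unchanged across all cases.
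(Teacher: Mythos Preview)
Your argument is correct. The paper itself does not prove \eqref{Eqn::ASumLem::Bs} and \eqref{Eqn::ASumLem::Fs} but simply quotes \cite[Lemma~2.3]{YangYuan10}; your on-level/below-level split, with the ancestor-cube bound $2^{nk\tau}\|g_k\|_{L^p(Q^{(k)})}\le\|(g_l)_l\|$ and the geometric series $\sum_{m\ge1}2^{-(\delta-n\tau)\sigma m}$ absorbing the coarse scales $k<J$, is precisely the standard direct argument behind that lemma, so you have in effect written out what the paper outsources. For \eqref{Eqn::ASumLem::Ns} both proofs reduce to the scalar $\ell^q$-convolution inequality: the paper rewrites $\|g_k\|_{M^p_\tau}$ as an $L^p(\R^n)$-norm of a pointwise supremum over cubes and then invokes the $\tau=0$ case of \eqref{Eqn::ASumLem::Bs}, whereas you use directly that $M^p_\tau$ is a $p$-quasi-norm to pass the sum outside and then apply the $\ell^q$-bound with $a_k=\|g_k\|_{M^p_\tau}$. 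Your route is a little more transparent and, as a byproduct, shows that \eqref{Eqn::ASumLem::Ns} actually only requires $\delta>0$; the hypothesis $\delta>n\tau$ is needed solely for the below-level piece in \eqref{Eqn::ASumLem::Bs} and \eqref{Eqn::ASumLem::Fs}, where the $j$-index is truncated at the scale $J$ of the cube in the outer supremum.
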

 \begin{proof}
 \eqref{Eqn::ASumLem::Bs} and \eqref{Eqn::ASumLem::Fs} have been done in \cite[Lemma 2.3]{YangYuan10}. We only prove \eqref{Eqn::ASumLem::Ns}.
 
Using the case $\tau=0$ in \eqref{Eqn::ASumLem::Bs} we have $$\textstyle\big\|\big(\sum_{k\ge0}2^{-\delta|j-k|}f_k\big)_{j=0}^\infty\big\|_{\ell^q(L^p)}\lesssim_{p,q,\delta}\|(f_j)_{j=0}^\infty\|_{\ell^q(L^p)},\quad\forall (f_j)_{j=0}^\infty\in \ell^q(\Z_{\ge0};L^p(\R^n)).$$
 
 Note that $\|g_k\|_{M^p_\tau}=\|\sup_{Q_{J,v}}|2^{nJ\tau}\1_{Q_{J,v}}\cdot g_k|\|_{L^p(\R^n)}$. By taking $f_k:=\sup_{Q_{J,v}}|2^{nJ\tau}\1_{Q_{J,v}}\cdot g_k|$ above we have
 \begin{align*}
     &\Big\|\Big(\sum_{k\ge0}2^{-\delta|j-k|}|g_k|\Big)_{j=0}^\infty\Big\|_{\ell^qM^p_\tau}=\Big\|\Big(\sup_{Q_{J,v}\in\Qc}2^{nJ\tau}\1_{Q_{J,v}}\cdot\sum_{k\ge0}2^{-\delta|j-k|}|g_k|\Big)_{j=0}^\infty\Big\|_{\ell^q(L^p)}
     \\
     &\quad\le\Big\|\Big(\sum_{k\ge0}2^{-\delta|j-k|}\sup_{Q_{J,v}\in\Qc}2^{nJ\tau}\1_{Q_{J,v}}\cdot |g_k|\Big)_{j=0}^\infty\Big\|_{\ell^q(L^p)}=\Big\|\Big(\sum_{k\ge0}2^{-\delta|j-k|}f_k\Big)_{j=0}^\infty\Big\|_{\ell^q(L^p)}
     \\
     \hspace{0.8in}&\quad\lesssim_{p,q,\delta}\|(f_j)_{j=0}^\infty\|_{\ell^q(L^p)}=\|(g_j)_{j=0}^\infty\|_{\ell^qM^p_\tau}.&\hspace{0.8in}\qedhere
 \end{align*}
\end{proof}

\begin{lemma}\label{Lem::STInq}
Let $\Omega\subset\R^n$ be a special Lipschitz domain,  let $\phi=(\phi_j)_{j=0}^\infty$ be a Littlewood-Paley family associated with $\Omega$, and let $\theta=(\theta_j)_{j=0}^\infty$ satisfies conditions \ref{Item::LPCond::Momt}, \ref{Item::LPCond::Scal} and \ref{Item::LPCond::Supp}. Then for any $N>0$ and $\gamma\in(0,\infty]$ there is a $C=C(\theta,\phi,N)>0$, such that,
\begin{equation}\label{Eqn::STInq::omega}
    \Pc^{\theta,N}_{\Omega,j}f(x)\le C\bigg(\sum_{k=0}^\infty 2^{-N\gamma|j-k|}\int_{\Omega}\frac{2^{kn}|\phi_k\ast f(y)|^\gamma dy}{(1+2^k|x-y|)^{N\gamma}}\bigg)^{1/\gamma},\quad \forall f\in\Ss'(\R^n),\ j\ge0,\ x\in \Omega.
\end{equation}
\end{lemma}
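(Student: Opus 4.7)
The plan is to prove the inequality by a Calder\'on-type reproducing formula combined with pointwise almost-orthogonality bounds, using the support condition \ref{Item::LPCond::Supp} to keep every convolution anchored in~$\Omega$.

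First, I would invoke Lemma \ref{Lem::ExistPsi} to produce $\psi=(\psi_k)_{k=0}^\infty$ satisfying \ref{Item::LPCond::Momt} and \ref{Item::LPCond::Scal} with $\sum_{k=0}^\infty\psi_k\ast\phi_k=\delta_0$. A close look at the explicit formulas $\hat\psi_0=2\hat\phi_0-\hat\phi_0^3$ and the analogous ones for $j\ge1$ shows that every $\psi_k$ is built from finitely many convolutions of rescaled copies of $\phi_0$; since the downward cone $\{x_n<-\|\nabla\rho\|_{L^\infty}|x'|\}$ is closed under addition and positive dilation, each $\psi_k$ inherits the support property \ref{Item::LPCond::Supp}. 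Consequently $\theta_j\ast\psi_k$ is also supported in this cone, and the identity
\begin{equation*}
    \theta_j\ast f(y)=\sum_{k=0}^\infty\int_{\R^n}(\theta_j\ast\psi_k)(y-w)(\phi_k\ast f)(w)\,dw
\end{equation*}
makes sense for $y\in\Omega$: whenever the integrand is nonzero, $y-w$ lies in the downward cone, and by the Lipschitz bound on $\rho$ this forces $w\in\Omega$, where $\phi_k\ast f$ is well defined. The integration can therefore be restricted to $\Omega$.

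Second, I would establish the pointwise almost-orthogonality bound
\begin{equation*}
    |(\theta_j\ast\psi_k)(z)|\lesssim_{M,N}2^{-M|j-k|}\,\frac{2^{(j\wedge k)n}}{(1+2^{j\wedge k}|z|)^N}
\end{equation*}
for arbitrarily large $M,N$. This is the pointwise refinement of Lemma \ref{Lem::Heideman}, obtained by Taylor-expanding the factor with the smaller scale and exploiting the cancellation from the moment condition on the other factor, exactly as in the proofs cited in \cite{ExtensionLipschitz,Bui}. Inserting this bound into the reproducing formula, dividing by $(1+2^j|x-y|)^N$, and applying the elementary inequality $(1+2^{j\wedge k}|x-w|)\lesssim(1+2^j|x-y|)(1+2^{j\wedge k}|y-w|)$, then taking $\sup_{y\in\Omega}$, yields the $\gamma=1$ case of \eqref{Eqn::STInq::omega}; the mismatch between $2^{(j\wedge k)n}$ and $2^{kn}$ when $k>j$ is absorbed into $2^{-M|j-k|}$ by choosing $M$ larger than $n$, which is permissible since $M$ is free.

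Finally, to pass from $\gamma=1$ to arbitrary $\gamma\in(0,\infty]$ I would apply the standard self-improvement argument of Str\"omberg--Torchinsky and Rychkov: writing $|\phi_k\ast f(w)|=|\phi_k\ast f(w)|^\gamma\cdot|\phi_k\ast f(w)|^{1-\gamma}$ and bounding the second factor by $[\Pc^{\phi,N}_{\Omega,k}f(x)]^{1-\gamma}(1+2^k|x-w|)^{N(1-\gamma)}$ reduces (after an elementary sum/integral manipulation) the $\gamma<1$ case to the $\gamma=1$ case applied to $|\phi_k\ast f|^\gamma$, with the extra factor of Peetre maxima absorbed on the left by a qualitative a priori finiteness argument; the case $\gamma>1$ is handled directly by H\"older's inequality against the weight $(1+2^k|x-w|)^{-N\gamma'}$, and $\gamma=\infty$ by sending $\gamma\to\infty$. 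The main obstacle is arranging the support geometry at the first step so that the integral genuinely lives in $\Omega$, and ensuring the Peetre maxima used in the self-improvement step are a priori finite so that they can be safely absorbed; both are resolved exactly as in the proof of \cite[Theorem~3.2]{ExtensionLipschitz}.
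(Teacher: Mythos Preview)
Your proposal uses the same ingredients as the paper (the Calder\'on reproducing formula from Lemma~\ref{Lem::ExistPsi}, almost-orthogonality, and Str\"omberg--Torchinsky self-improvement), but the organisation is inverted and the self-improvement step has a gap.

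The paper works in two independent halves: it \emph{cites} Rychkov for the special case $\theta=\phi$ with arbitrary $\gamma$, and then proves only the case $\gamma=\infty$ for general $\theta$, namely $\Pc^{\theta,N}_{\Omega,j}f(x)\lesssim\sup_{k}2^{-N|j-k|}\Pc^{\phi,N}_{\Omega,k}f(x)$, via the reproducing formula and Lemma~\ref{Lem::Heideman}. Chaining the two gives the lemma. Your route is the reverse: establish $\gamma=1$ for general $\theta$ directly, then self-improve.

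The difficulty is the absorption. After your substitution $|\phi_k\ast f(w)|^{1-\gamma}\le[\Pc^{\phi,N}_{\Omega,k}f(x)]^{1-\gamma}(1+2^k|x-w|)^{N(1-\gamma)}$, the inequality reads
\[
\Pc^{\theta,N}_{\Omega,j}f(x)\ \lesssim\ \sum_{k\ge0}2^{-M|j-k|}\,\bigl[\Pc^{\phi,N}_{\Omega,k}f(x)\bigr]^{1-\gamma}\int_\Omega\frac{2^{kn}|\phi_k\ast f(w)|^\gamma}{(1+2^k|x-w|)^{N\gamma}}\,dw.
\]
The factor you want to absorb is $[\Pc^{\phi,N}_{\Omega,k}f(x)]^{1-\gamma}$, but the left-hand side carries $\Pc^{\theta,N}_{\Omega,j}$. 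The Str\"omberg--Torchinsky trick needs the \emph{same} maximal family on both sides, so that after a weighted supremum in $j$ a common quantity appears and can be divided out under the a~priori finiteness hypothesis. Here there is nothing to cancel; and since $\theta$ satisfies only \ref{Item::LPCond::Momt}, \ref{Item::LPCond::Scal}, \ref{Item::LPCond::Supp} (not \ref{Item::LPCond::AppId}), you cannot run a reproducing formula through $\theta$ to reverse the comparison.

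The fix is available but forces you onto the paper's path: first run the self-improvement on the $\phi$--$\phi$ inequality (which your own $\gamma=1$ argument does yield when you specialise $\theta=\phi$), obtaining \eqref{Eqn::STInq::omega} for $\theta=\phi$; then use your $\gamma=1$ (or, more cheaply, $\gamma=\infty$) estimate to pass from $\Pc^{\theta,N}_{\Omega,j}$ to $\Pc^{\phi,N}_{\Omega,k}$. That is exactly the paper's two-step structure.
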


\begin{proof}
The special case $\theta=\phi$ of \eqref{Eqn::STInq::omega} is proved in \cite[Proof of Theorem 3.2, Step 1]{ExtensionLipschitz}. Namely, we have
\begin{equation}\label{Eqn::STInq::Special}
    \Pc^{\phi,N}_{\Omega,j}f(x)\lesssim_{\phi,N}\bigg(\sum_{k=0}^\infty 2^{-N\gamma|j-k|}\int_{\Omega}\frac{2^{kn}|\phi_k\ast f(y)|^\gamma dy}{(1+2^k|x-y|)^{N\gamma}}\bigg)^{1/\gamma},\quad \forall f\in\Ss'(\R^n),\ j\ge0,\ x\in \Omega.
\end{equation}
Also see \cite[Proof of Theorem 2.6, Step 1]{UllrichCoorbit} for the argument. Thus it suffices to prove the case $\gamma=\infty$:
\begin{equation}\label{Eqn::STInq::Gamma=Infty}
    \Pc^{\theta,N}_{\Omega,j}f(x)\lesssim_{\theta,\phi,N}\sup_{k\ge0}2^{-N|j-k|}\Pc^{\phi,N}_{\Omega,k}f(x),\quad\forall f\in\Ss'(\R^n),\quad j\ge0,\quad x\in\Omega.
\end{equation}

Let $\psi=(\psi_j)_{j=0}^\infty$ satisfies the consequence of Lemma \ref{Lem::ExistPsi}, so $\theta_j\ast f=\sum_{k=0}^\infty(\theta_j\ast\psi_k)\ast(\phi_k\ast f)$ for $j\ge0$. 
By assumption $\phi_j,\psi_j,\theta_j$ are supported in $K=\{x_n<-\|\nabla\rho\|_{L^\infty}\cdot|x'|\}$ where $\rho$ is the defining function for $\Omega=\{x_n>\rho(x')\}$. Using the property $\Omega-K\subseteq\Omega$, we have
\begin{align*}\1_\Omega\cdot(\theta_j\ast f)&\textstyle=\1_\Omega\cdot\sum_{k=0}^\infty(\theta_j\ast\psi_k)\ast(\1_\Omega\cdot(\phi_k\ast f));
\\
    \text{and thus}\quad \Pc^{\theta,N}_{\Omega,j}f(x)&=\sup_{z\in\Omega}\frac{|\theta_j\ast f(z)|}{(1+2^j|x-z|)^N}\le\sup_{z\in\Omega}\sum_{k=0}^\infty\int_\Omega\frac{|\theta_j\ast\psi_k(z-y)||\phi_k\ast f(y)|dy}{(1+2^j|x-z|)^N}.
\end{align*}

The elementary inequality yields
\begin{equation*}\label{Eqn::STInq::TmpEle}
    \frac{1}{(1+2^j|x-z|)^N}\le\frac{2^{N|j-k|}}{(1+2^k|x-z|)^N}\frac{(1+2^k|z-y|)^N}{(1+2^k|z-y|)^N}\le 2^{N|j-k|}\frac{(1+2^k|z-y|)^N}{(1+2^k|x-y|)^N}.
\end{equation*}

Therefore, 
\begin{equation}\label{Eqn::STInq::PfGamma=Infty}
    \begin{aligned}
    \Pc^{\theta,N}_{\Omega,j}f(x)&=\sup_{z\in\Omega}\frac{|\phi_k\ast f(z)|}{(1+2^k|x-z|)^N}\sum_{k=0}^\infty\int_\Omega 2^{N|j-k|}|\theta_j\ast\psi_k(z-y)|(1+2^k|z-y|)^Ndy
    \\
    &\le\sup_{k\ge0}2^{-N|j-k|}\Pc_{\Omega,k}^{\phi,N}f(x)\sum_{l=0}^\infty \int_\Omega 2^{2N|j-l|}|\theta_j\ast\psi_l(y)|(1+2^l|y|)^Ndy
    \\
    &\lesssim_{\theta,\phi,N}\sup_{k\ge0}2^{-N|j-k|}\Pc_{\Omega,k}^{\phi,N}f(x)\sum_{l=0}^\infty 2^{(2N-(2N+1))|j-l|}\lesssim\sup_{k\ge0}2^{-N|j-k|}\Pc_{\Omega,k}^{\phi,N}f(x).
\end{aligned}
\end{equation}
Here the last inequality is obtained by applying Lemma \ref{Lem::Heideman}.

Therefore we get \eqref{Eqn::STInq::Gamma=Infty}. Combining it with \eqref{Eqn::STInq::Special} we complete the proof.
\end{proof}

Recall the Hardy-Littlewood maximal function $\M f(x):=\sup_{R>0}|B(0,R)|^{-1}\int_{B(x,R)}|f(y)|dy$ for $f\in L^1_\loc$.
\begin{lemma}\label{Lem::PeeToHL}
Let $N>n$. There is a $C=C(N)>0$ such that for any $g\in L^1_\loc(\R^n)$,
\begin{equation}\label{Eqn::PeeToHL}
    \int_{\R^n}\frac{2^{kn}|g(y)|dy}{(1+2^k|x-y|)^N}\le C\sum_{w\in\Z^n}\frac1{(1+|v-w|)^{N-n}}\cdot \M(\1_{Q_{J,w}}\cdot g)(x),\quad J\in\Z,\ v\in\Z^n,\ k\ge J,\ x\in Q_{J,v}.
\end{equation}
\end{lemma}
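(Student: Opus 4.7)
The plan is to decompose $\R^n=\bigsqcup_{w\in\Z^n}Q_{J,w}$, bound each piece
\begin{equation*}
    I_w(x):=\int_{Q_{J,w}}\frac{2^{kn}|g(y)|\,dy}{(1+2^k|x-y|)^N}
\end{equation*}
by a constant times $(1+|v-w|)^{-(N-n)}\M(\1_{Q_{J,w}}\cdot g)(x)$, and then sum over $w$. The geometric inputs are that for $x\in Q_{J,v}$ and $y\in Q_{J,w}$ one has $|x-y|\le 2^{-J}(|v-w|+\sqrt n)$ and $|x-y|\ge 2^{-J}(|v-w|-\sqrt n)_+$; in particular $Q_{J,w}\subset B(x,R)$ with $R:=2^{-J}(|v-w|+\sqrt n)$, so by definition of $\M$,
\begin{equation*}
    \int_{Q_{J,w}}|g(y)|\,dy\le|B(x,R)|\cdot\M(\1_{Q_{J,w}}\cdot g)(x)\lesssim 2^{-Jn}(1+|v-w|)^n\,\M(\1_{Q_{J,w}}\cdot g)(x).
\end{equation*}

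I would then split into two regimes. \emph{Far cubes} ($|v-w|\ge 2\sqrt n$): the lower bound gives $2^k|x-y|\ge 2^{k-J}|v-w|/2$, hence $(1+2^k|x-y|)^N\gtrsim(2^{k-J}|v-w|)^N$, so plugging in yields the factor $2^{kn}/(2^{k-J}|v-w|)^N=2^{Jn}\cdot 2^{(k-J)(n-N)}|v-w|^{-N}$. Combining with the maximal-function estimate for $\int_{Q_{J,w}}|g|$ and using $(1+|v-w|)^n\lesssim|v-w|^n$ on this regime gives $I_w(x)\lesssim 2^{(k-J)(n-N)}|v-w|^{n-N}\M(\1_{Q_{J,w}}\cdot g)(x)$. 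Since $k\ge J$ and $N>n$, the first factor is $\le 1$, producing the claimed $(1+|v-w|)^{-(N-n)}$ decay. \emph{Nearby cubes} ($|v-w|\le 2\sqrt n$): there are only $O(1)$ such $w$, so it suffices to prove the unweighted bound $I_w(x)\lesssim\M(\1_{Q_{J,w}}\cdot g)(x)$, which is the classical fact that convolution with $2^{kn}(1+2^k|\cdot|)^{-N}$ is majorized by $\M$ for $N>n$; it follows by breaking the integration into dyadic annuli $\{2^{l-1-k}\le|x-y|\le 2^{l-k}\}$ and summing a geometric series in $l$ (this is where $N>n$ is used). Since $(1+|v-w|)^{-(N-n)}$ is bounded away from zero on this finite set, the same form of bound holds, and summing over $w\in\Z^n$ gives \eqref{Eqn::PeeToHL}.

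The only delicate point I anticipate is the correct bookkeeping of the powers of $2^{k-J}$ in the Far regime: the hypothesis $k\ge J$ is used precisely to ensure that the factor $2^{(k-J)(n-N)}$ is bounded by $1$ rather than blowing up, so the restriction $k\ge J$ in the statement is genuinely essential and not merely a convenience.
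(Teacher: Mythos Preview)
Your proposal is correct and follows essentially the same strategy as the paper's proof: decompose $\R^n$ into the dyadic cubes $Q_{J,w}$, separate into nearby cubes (finitely many, handled by the classical pointwise domination of radially decreasing $L^1$ convolution kernels by $\M$) and far cubes (use the lower bound $|x-y|\gtrsim 2^{-J}|v-w|$ together with $Q_{J,w}\subset B(x,2^{-J}(|v-w|+\sqrt n))$ to extract both the $(1+|v-w|)^{-(N-n)}$ decay and the maximal function). The only cosmetic difference is that the paper first groups the nearby contribution into a single ball $B(x,3\sqrt n\,2^{-J})$ before splitting back into cubes, whereas you work cube-by-cube throughout; your bookkeeping of the factor $2^{(k-J)(n-N)}$ and the role of $k\ge J$ is exactly right.
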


Our lemma here is weaker than the corresponding estimate in \cite[Proof of Theorem 1.2, Step 3]{YangYuan10}.
\begin{proof}
By taking a translation, it suffices to prove the estimate on $x\in Q_{J,0}$, i.e for $v=0$. Note that if $y\in Q_{J,w}$, then $|x-y|\ge\dist (Q_{J,w},Q_{J,0})\ge \frac1{\sqrt n}2^{-J}\max(0,|w|-\sqrt n)$ and $|x-y|\le |w|+\sqrt n$. Therefore
\begin{align*}
    &\int_{\R^n}\frac{2^{kn}|g(y)| dy}{(1+2^k|x-y|)^N}\le\int_{B(x,3\sqrt n 2^{-J})}\frac{2^{kn}|g(y)|  dy}{(1+2^k|x-y|)^N}+\sum_{|w|>2\sqrt n}\int_{Q_{J,w}}\frac{2^{kn}|g(y)|  dy}{(1+2^k|x-y|)^N}
    \\
    &\quad\lesssim\Big\|\frac{2^{n(k-J)}}{(1+2^k|y|)^N}\Big\|_{L^1(\R^n_y)}\M(\1_{B(0,4\sqrt n 2^{-J})}\cdot  g)(x)+\sum_{|w|>2\sqrt n}\frac{2^{kn}}{\big(1+2^k 2^{-J}(\frac{|w|}{\sqrt n}-1)\big)^N}\int_{Q_{J,w}}|g(y)| dy
    \\
    &\quad\lesssim\sum_{|w|<4\sqrt n}\M(\1_{Q_{J,w}}\cdot g)(x)+\sum_{|w|>2\sqrt n}\frac{2^{-(k-J)(N-n)}}{|w|^{N-n}}\cdot\frac{2^{nJ}}{|w|^n}\int_{B(x,2^{-J}(|w|+\sqrt n))}|\1_{Q_{J,w}}\cdot g(y)| dy
    \\
    \hspace{0.3in}&\quad\lesssim\sum_{w\in\Z^n}\frac1{(1+|w|)^{N-n}}\cdot \M(\1_{Q_{J,w}}\cdot g)(x).&\hspace{0.3in}\qedhere
\end{align*}
\end{proof}
Combining Lemmas \ref{Lem::ASumLem} - \ref{Lem::PeeToHL} we have the following Morrey-type estimates for Peetre maximal functions.
\begin{prop}\label{Prop::PeeEst}Keeping the assumptions of Lemma \ref{Lem::STInq}, for every $0<p,q\le\infty$, $s\in\R$, $\tau\ge 0$ and $N>~\!\max(2n/\min(p,q),|s|+n\tau)$, there is a $C=C(\theta,\phi,p,q,s,\tau,N)>0$ such that for every $f\in\Ss'(\Omega)$,
\begin{align}\label{Eqn::PeeEst::SpeBs}
    \big\|\big(2^{js}\1_\Omega\cdot(\Pc^{\theta,N}_{\Omega,j}f)\big)_{j=0}^\infty\big\|_{\ell^qL^p_\tau}
    &\le C\big\|\big(2^{js}\1_\Omega\cdot(\phi_j\ast f)\big)_{j=0}^\infty\big\|_{\ell^qL^p_\tau};
    \\\label{Eqn::PeeEst::SpeFs}
    \big\|\big(2^{js}\1_\Omega\cdot(\Pc^{\theta,N}_{\Omega,j}f)\big)_{j=0}^\infty\big\|_{L^p_\tau\ell^q}
    &\le C\big\|\big(2^{js}\1_\Omega\cdot(\phi_j\ast f)\big)_{j=0}^\infty\big\|_{L^p_\tau\ell^q},\qquad\text{provided }p<\infty;
    \\\label{Eqn::PeeEst::SpeNs}
    \big\|\big(2^{js}\1_\Omega\cdot(\Pc^{\theta,N}_{\Omega,j}f)\big)_{j=0}^\infty\big\|_{\ell^qM^p_\tau}
    &\le C\big\|\big(2^{js}\1_\Omega\cdot(\phi_j\ast f)\big)_{j=0}^\infty\big\|_{\ell^qM^p_\tau}.
\end{align}
\end{prop}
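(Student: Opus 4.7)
The plan is to combine Lemmas \ref{Lem::STInq}, \ref{Lem::ASumLem}, and \ref{Lem::PeeToHL} via a standard $\gamma$-power trick: raising all three norms to the $\gamma$-th power replaces them by ordinary $L^{p/\gamma}$-type Morrey sequence norms with $p/\gamma>1$, in which setting Hardy--Littlewood-type maximal inequalities are available. After reducing to this setting I apply the scalar HL / vector-valued Fefferman--Stein / Chiarenza--Frasca maximal inequality according to which of the three target spaces is considered.

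The hypothesis $N>\max(2n/\min(p,q),\,|s|+n\tau)$ makes the interval $\gamma\in(2n/N,\min(p,q))$ nonempty; I fix such a $\gamma$. Raising the estimate of Lemma \ref{Lem::STInq} to the $\gamma$-th power and multiplying by $2^{js\gamma}$, for $x\in\Omega$ and $j\ge0$,
\[
A_j(x):=2^{js\gamma}\1_\Omega(x)\big(\Pc^{\theta,N}_{\Omega,j}f(x)\big)^\gamma\le C\sum_{k=0}^\infty 2^{-\delta|j-k|}B_k(x),\qquad \delta:=(N-|s|)\gamma,
\]
where $B_k(x):=2^{ks\gamma}\int_\Omega\frac{2^{kn}|\phi_k\ast f(y)|^\gamma dy}{(1+2^k|x-y|)^{N\gamma}}$. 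Since $\delta>n(\tau\gamma)$ from $N>|s|+n\tau$, Lemma \ref{Lem::ASumLem} with parameters $(p/\gamma,q/\gamma,\tau\gamma)$ gives $\|(A_j)\|_{\widetilde X}\le C\|(B_k)\|_{\widetilde X}$ for $\widetilde X$ equal to $\ell^{q/\gamma}L^{p/\gamma}_{\tau\gamma}$, $L^{p/\gamma}_{\tau\gamma}\ell^{q/\gamma}$, or $\ell^{q/\gamma}M^{p/\gamma}_{\tau\gamma}$. A direct unfolding of the definitions shows the identity $\|(A_j)\|_{\widetilde X}=\|(2^{js}\1_\Omega\Pc^{\theta,N}_{\Omega,j}f)_j\|_X^{\gamma}$ for the corresponding space $X\in\{\ell^qL^p_\tau,L^p_\tau\ell^q,\ell^qM^p_\tau\}$, and similarly for the sequence built from $\phi_k\ast f$, so the target inequality reduces to $\|(B_k)\|_{\widetilde X}\le C\|(2^{ks}\1_\Omega\phi_k\ast f)\|_{X}^{\gamma}$.

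To establish the latter, fix a cube $Q_{J,v}$ involved in the $\widetilde X$-norm, and note that every $k$ appearing in that norm satisfies $k\ge\max(0,J)\ge J$. Lemma \ref{Lem::PeeToHL} applied with $N\gamma$ in place of $N$ yields, for $x\in Q_{J,v}$,
\[
B_k(x)\le C\sum_{w\in\Z^n}(1+|v-w|)^{-(N\gamma-n)}\cdot 2^{ks\gamma}\M\big(\1_{Q_{J,w}\cap\Omega}|\phi_k\ast f|^\gamma\big)(x),
\]
and $\sum_w(1+|v-w|)^{-(N\gamma-n)}$ is finite uniformly in $v$ because $N\gamma>2n$. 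I then apply: the vector-valued Fefferman--Stein inequality on $L^{p/\gamma}(\R^n;\ell^{q/\gamma})$ (valid for $p/\gamma,q/\gamma>1$, $p<\infty$) in the $\Fs$-case; scalar HL on $L^{p/\gamma}$ followed by Minkowski's inequality in $\ell^{q/\gamma}$ in the $\Bs$-case, with the trivial $L^\infty$-bound $\|\M g\|_{L^\infty}\le\|g\|_{L^\infty}$ handling $p=\infty$; and the Chiarenza--Frasca boundedness $\|\M g\|_{M^{p/\gamma}_{\tau\gamma}}\le C\|g\|_{M^{p/\gamma}_{\tau\gamma}}$ applied for each $k$ followed by $\ell^{q/\gamma}$-summation in the $\Ns$-case. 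In all three cases, using the defining Morrey-type inequality at $Q_{J,w}$ for the $X$-norm, the bound that results is $C\cdot 2^{-nJ\tau\gamma}\|(2^{ks}\1_\Omega\phi_k\ast f)\|_X^\gamma$; multiplying by $2^{nJ\tau\gamma}$ and taking the supremum over $Q_{J,v}$ concludes the argument.

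The principal obstacle is the precise calibration of $\gamma$: it must be large enough ($\gamma>2n/N$) for the shift-sum over $w$ arising from Lemma \ref{Lem::PeeToHL} to converge, yet small enough ($\gamma<\min(p,q)$) for the maximal inequalities to apply after the $\gamma$-power reduction. The two numerical hypotheses on $N$ are exactly what make this interval nonempty and simultaneously keep $\delta>n(\tau\gamma)$ in Lemma \ref{Lem::ASumLem}. Verifying this interplay, together with the endpoint $p=\infty$ behavior in the $\Bs$- and $\Ns$-cases and the endpoint $q=\infty$ behavior in all three, is the main bookkeeping burden.
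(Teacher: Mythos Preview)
Your argument for the $\Bs$- and $\Fs$-cases is correct and matches the paper's proof essentially verbatim: the $\gamma$-power trick, Lemma~\ref{Lem::STInq}, Lemma~\ref{Lem::ASumLem}, Lemma~\ref{Lem::PeeToHL}, and then scalar HL / Fefferman--Stein, in that order.

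There is, however, a genuine gap in the $\Ns$-case. Your claim that ``every $k$ appearing in that norm satisfies $k\ge\max(0,J)\ge J$'' is what makes Lemma~\ref{Lem::PeeToHL} applicable, but it is \emph{false} for $\widetilde X=\ell^{q/\gamma}M^{p/\gamma}_{\tau\gamma}$. In the $\ell^qM^p_\tau$-norm the Morrey supremum $\sup_{Q_{J,v}}2^{nJ\tau}\|\,\cdot\,\|_{L^p(Q_{J,v})}$ sits \emph{inside} the $\ell^q$-sum, so for a fixed index $k$ the supremum ranges over \emph{all} $J\in\Z$, including $J>k$. Lemma~\ref{Lem::PeeToHL} explicitly requires $k\ge J$ (its proof produces a factor $2^{-(k-J)(N\gamma-n)}$ that blows up for $J>k$), so the cube decomposition you invoke is unavailable for small cubes. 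Consequently the passage from $B_k$ to a sum of $\M(\1_{Q_{J,w}}\cdot g)$ breaks down, and the subsequent Chiarenza--Frasca step is applied to a function that depends on $(J,v)$ --- the very indices you are taking the supremum over.

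The paper avoids this by \emph{not} using Lemma~\ref{Lem::PeeToHL} in the $\Ns$-case at all. Instead it uses the elementary global pointwise bound
\[
B_k(x)=\int_\Omega\frac{2^{kn}|2^{ks}\phi_k\ast f(y)|^\gamma\,dy}{(1+2^k|x-y|)^{N\gamma}}\ \lesssim_{N,\gamma}\ \M\big(\1_\Omega\,|2^{ks}\phi_k\ast f|^\gamma\big)(x),
\]
valid for $N\gamma>n$ (no cubes involved), and then applies the Morrey-maximal inequality $\|\M g\|_{M^{p/\gamma}_{\tau\gamma}}\lesssim\|g\|_{M^{p/\gamma}_{\tau\gamma}}$ directly for each $k$ before taking the $\ell^{q/\gamma}$-sum. (The paper also first reduces from $\theta$ to $\phi$ via \eqref{Eqn::STInq::Gamma=Infty} and \eqref{Eqn::ASumLem::Ns}, but that step is cosmetic.) Replacing your use of Lemma~\ref{Lem::PeeToHL} in the $\Ns$-case by this one-line bound fixes the argument; your Chiarenza--Frasca citation then applies to the correct, cube-independent function.
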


\begin{rem}\label{Rmk::RmkN}
It is possible that the assumption $N>\max(\frac{2n}{\min(p,q)},|s|+n\tau)$ can be relaxed to $N>\frac n{\min(p,q)}$. In applications, we only need a large enough $N$ that does not depend on $f$.

A similar result for \eqref{Eqn::PeeEst::SpeNs} can be found in \cite[Proposition 2.12]{TangXuMorrey}. Note that we require $\theta_j$ to have Fourier compact supports in that proposition.
\end{rem}
\begin{proof}We use a convention $\phi_j:\equiv0$ for $j\le-1$. Thus in the computations below every sequence $(a_j)_{j=J}^\infty$ is identical to $(a_j)_{j=\max(0,J)}^\infty$.

By the assumption on $N$ we can take $\gamma\in(0,\min(p,q))$ such that $N\gamma>2n$. We first prove \eqref{Eqn::PeeEst::SpeFs}. 

Since $N>|s|+n\tau$. By Lemma  \ref{Lem::STInq} and using $2^{j\gamma s}2^{-N\gamma|j-k|}\le2^{-(N-|s|)\gamma|j-k|}2^{k\gamma s} $,
\begin{equation*}
    \|(2^{js}\Pc^{\theta,N}_{\Omega,j}f)_{j=0}^\infty\|_{L^p_\tau\ell^q}
    =\big\|\big(2^{j\gamma s}(\Pc^{\theta,N}_{\Omega,j}f)^\gamma\big)_{j=0}^\infty\big\|_{L^{\frac p\gamma}_{\tau\gamma}\ell^\frac q\gamma}^\frac1\gamma
    \lesssim\bigg\|\Big(\sum_{k=0}^\infty 2^{(|s|-N)\gamma|j-k|}\int_\Omega\frac{2^{kn}|2^{ks}\phi_k\ast f(y)|^\gamma dy}{(1+2^k|\cdot -y|)^{N\gamma}}\Big)_{j=0}^\infty\bigg\|_{L^{\frac p\gamma}_{\tau\gamma}\ell^\frac q\gamma}^\frac1\gamma.
\end{equation*}
By Lemma \ref{Lem::ASumLem} and since $(N-|s|)\gamma>n\tau\gamma$,
\begin{equation*}
    \bigg\|\Big(\sum_{k=0}^\infty 2^{(|s|-N)\gamma|j-k|}\int_\Omega\frac{2^{kn}|2^{ks}\phi_k\ast f(y)|^\gamma dy}{(1+2^k|\cdot -y|)^{N\gamma}}\Big)_{j=0}^\infty\bigg\|_{L^{\frac p\gamma}_{\tau\gamma}\ell^\frac q\gamma}
    \lesssim \bigg\|\Big(\int_\Omega\frac{2^{kn}|2^{ks}\phi_k\ast f(y)|^\gamma dy}{(1+2^k|\cdot -y|)^{N\gamma}}\Big)_{k=0}^\infty\bigg\|_{L^{\frac p\gamma}_{\tau\gamma}\ell^\frac q\gamma}.
\end{equation*}
Applying Lemma \ref{Lem::PeeToHL} with $g(x)=\1_\Omega(x)\cdot|2^{ks}\phi_k\ast f(x)|^\gamma$ for each $k\ge0$ and expanding the $L^{\frac p\gamma}_{\tau\gamma}\ell^\frac q\gamma$-norm,
\begin{align*}
    &\bigg\|\Big(\int_\Omega\frac{2^{kn}|2^{ks}\phi_k\ast f(y)|^\gamma dy}{(1+2^k|\cdot -y|)^{N\gamma}}\Big)_{k=0}^\infty\bigg\|_{L^{\frac p\gamma}_{\tau\gamma}\ell^\frac q\gamma}=\sup_{J\in\Z,v\in\Z^n}2^{nJ\tau\gamma\cdot\frac1\gamma}\bigg\|\Big(\int_\Omega\frac{2^{kn}|2^{ks}\phi_k\ast f(y)|^\gamma dy}{(1+2^k|\cdot-y|)^{N\gamma}}\Big)_{k=J}^\infty\bigg\|_{L^\frac p\gamma(Q_{J,v};\ell^\frac q\gamma)}^\frac1\gamma
        \\
        &\quad\lesssim_{N,\gamma}\sup_{J\in\Z,v\in\Z^n}2^{nJ\tau}\bigg\|\Big(\sum_{w\in\Z^n}\frac{1}{(1+|w-v|)^{N\gamma-n}}\M(\1_{Q_{J,w}}\cdot\1_\Omega\cdot|2^{ks}\phi_k\ast f|^\gamma)\Big)_{k=J}^\infty\bigg\|_{L^\frac p\gamma(Q_{J,v};\ell^\frac q\gamma)}^\frac1\gamma
        \\
        &\quad\le\Big(\sum_{v\in\Z^n}\frac1{(1+|v|)^{N\gamma-n}}\Big)^{1/\gamma}\sup_{J\in\Z,w\in\Z^n}2^{nJ\tau}\big\|\big(\M(\1_{Q_{J,w}\cap\Omega}\cdot|2^{ks}\phi_k\ast f|^\gamma)\big)_{k=J}^\infty\big\|_{L^\frac p\gamma(\R^n;\ell^\frac q\gamma)}^\frac1\gamma.
\end{align*}
Since $N\gamma-n>n$ the sum $\sum_{v\in\Z^n}(1+|v|)^{n-N\gamma}$ is finite. 

Finally, applying Fefferman-Stein's inequality to $\big(\M(\1_{Q_{J,w}\cap\Omega}\cdot|2^{ks}\phi_k\ast f|^\gamma)\big)_{k=J}^\infty$ in $L^\frac p\gamma(\R^n;\ell^\frac q\gamma)$ for each $J\in\Z$ (see \cite[Theorem 1(1)]{FeffermanSteinVectHL} and also \cite[Remark 5.6.7]{GrafakosClassical}), since $1<p/\gamma<\infty$ and $1< q/\gamma\le\infty$,
\begin{align*}
    &\sup_{Q_{J,w}\in\Qc}2^{nJ\tau}\big\|\big(\M(\1_{Q_{J,w}\cap\Omega}\cdot|2^{ks}\phi_k\ast f|^\gamma)\big)_{k=J}^\infty\big\|_{L^\frac p\gamma(\R^n;\ell^\frac q\gamma)}^\frac1\gamma\lesssim\sup_{Q_{J,w}}2^{nJ\tau}\big\|\big(\1_{Q_{J,w}\cap\Omega}\cdot|2^{ks}\phi_k\ast f|^\gamma)\big)_{k=J}^\infty\big\|_{L^\frac p\gamma(\R^n;\ell^\frac q\gamma)}^\frac1\gamma
    \\
    &\quad=\sup_{Q_{J,w}}2^{nJ\tau}\big\|\big(\1_{\Omega}\cdot(2^{ks}\phi_k\ast f)\big)_{k=J}^\infty\big\|_{L^p(Q_{J,w};\ell^q)}=\big\|\big(2^{ks}\1_\Omega\cdot(\phi_k\ast f)\big)_{k=0}^\infty\big\|_{L^p_\tau\ell^q}.
\end{align*}
This completes the proof of \eqref{Eqn::PeeEst::SpeFs}.

The proof of  \eqref{Eqn::PeeEst::SpeBs} and \eqref{Eqn::PeeEst::SpeNs} are similar but simpler: by assumption $1<p/\gamma\le\infty$ we have
\begin{equation}\label{Eqn::PeeEst::BddM}
    \M:L^\frac p\gamma(\R^n)\to L^\frac p\gamma(\R^n).
\end{equation}
Therefore, we prove \eqref{Eqn::PeeEst::SpeBs} by the following:
\begin{align*}
    &\|(2^{js}\Pc^{\theta,N}_{\Omega,j}f)_{j=0}^\infty\|_{\ell^qL^p_\tau}
    \lesssim_{\theta,\phi,s,\tau,N,\gamma}\bigg\|\Big(\sum_{k=0}^\infty 2^{-(n\tau+1)\gamma|j-k|}\int_\Omega\frac{2^{kn}|2^{ks}\phi_k\ast f(y)|^\gamma dy}{(1+2^k|\cdot -y|)^{N\gamma}}\Big)_{j=0}^\infty\bigg\|_{\ell^\frac q\gamma L^{\frac p\gamma}_{\tau\gamma}}^\frac1\gamma&\text{by }\eqref{Eqn::STInq::omega}
    \\
    &\quad\lesssim_{p,q,s,\tau}\bigg\|\Big(\int_\Omega\frac{2^{kn}|2^{ks}\phi_k\ast f(y)|^\gamma dy}{(1+2^k|\cdot-y|)^{N\gamma}}\Big)_{k=0}^\infty\bigg\|_{\ell^\frac q\gamma L^\frac p\gamma_{\tau\gamma}}^\frac1\gamma&\text{by }\eqref{Eqn::ASumLem::Bs}
    \\
    &\quad\lesssim_{N,\gamma}\Big(\sum_{v\in\Z^n}\frac1{(1+|v|)^{N\gamma-n}}\Big)^{1/\gamma}\big\|\big(\M(\1_{\Omega}\cdot|2^{ks}\phi_k\ast f|^\gamma)\big)_{k=0}^\infty\big\|_{\ell^\frac q\gamma L^\frac p\gamma_{\tau\gamma}}^\frac1\gamma&\text{by }\eqref{Eqn::PeeToHL}
    \\
    &\quad\lesssim_{p,\gamma}\big\|\big(\1_{\Omega}\cdot|2^{ks}\phi_k\ast f|^\gamma)\big)_{k=0}^\infty\big\|_{\ell^{q/\gamma} L^{p/\gamma}_{\tau\gamma}}^{1/\gamma}=\big\|\big(2^{ks}\1_\Omega\cdot(\phi_k\ast f)\big)_{k=0}^\infty\big\|_{\ell^qL^p_\tau}&\text{by }\eqref{Eqn::PeeEst::BddM}.
\end{align*}

Finally we prove \eqref{Eqn::PeeEst::SpeNs}. Using \eqref{Eqn::STInq::Gamma=Infty} and \eqref{Eqn::ASumLem::Ns} (since $N>|s|+n\tau$) we have
\begin{equation}\label{Eqn::PeeEst::PfNs1}
    \|(2^{js}\Pc^{\theta,N}_{\Omega,j}f)_{j=0}^\infty\|_{\ell^qM^p_\tau}\lesssim_{\theta,\phi,s,N}\Big\|\Big(\sum_{k=0}^\infty2^{(N-|s|)|j-k|}2^{ks}\Pc^{\phi,N}_{\Omega,k}f\Big)_{j=0}^\infty\Big\|_{\ell^qM^p_\tau}\lesssim_{p,q,\tau,N}\|(2^{js}\Pc^{\phi,N}_{\Omega,j}f)_{j=0}^\infty\|_{\ell^qM^p_\tau}.
\end{equation}

Taking $\gamma\in(n/N,\min(p,q))$, we have $2^{js}(\Pc^{\phi,N}_{\Omega,j}f)\lesssim_{N,\gamma}\M(|2^{js}\1_\Omega\cdot(\phi_j\ast f)|^\gamma)^{1/\gamma}$ pointwise in $\R^n$.

When $p<\infty$ and $\tau<1/p$, by \cite[Lemma 2.5]{TangXuMorrey} we have
\begin{equation}\label{Eqn::PeeEst::PfNs2}
    \|2^{js}\Pc^{\phi,N}_{\Omega,j}f\|_{M^p_\tau}\lesssim_{N,\gamma}\big\|\M\big(|2^{js}\1_\Omega\cdot(\phi_j\ast f)|^\gamma\big)^{1/\gamma}\big\|_{M^p_\tau}\lesssim_{p,\gamma,\tau}\|2^{js}\1_\Omega\cdot(\phi_j\ast f)\|_{M^p_\tau},\quad j\ge0.
\end{equation}

We see that \eqref{Eqn::PeeEst::PfNs2} is valid for all $1<p/\gamma\le\infty,\tau\ge0$. 

When $\tau=1/p$, we have $M^p_\tau=L^\infty$ by \cite[Remark 11(ii)]{SickelMorrey1}, so \eqref{Eqn::PeeEst::PfNs2} follows from \eqref{Eqn::PeeEst::BddM}. When $\tau>1/p$ we have $M^p_\tau=\{0\}$, so \eqref{Eqn::PeeEst::PfNs2} holds trivially.

Thus by taking $\ell^q$-sum of \eqref{Eqn::PeeEst::PfNs2}, we get \eqref{Eqn::PeeEst::SpeNs}, completing the proof. 
\end{proof}

\begin{prop}\label{Prop::PeeEstRn}Let $\theta=(\theta_j)_{j=0}^\infty$ satisfies \ref{Item::LPCond::Momt} and \ref{Item::LPCond::Scal}, and let $\lambda=(\lambda_j)_{j=0}^\infty$ satisfies \ref{Item::CLPCond::Four} and \ref{Item::CLPCond::Scal}. For any $0<p,q\le\infty$, $s\in\R$, $\tau\ge 0$ and $N>\max(2n/\min(p,q),|s|+n\tau)$, there is a $C=C(\theta,\lambda,p,q,s,\tau,N)>0$ such that for every $\tilde f\in\Ss'(\R^n)$,
\begin{align}\label{Eqn::PeeEst::RnBs}
    \|(2^{js}\Pc^{\theta,N}_{\R^n,j}\tilde f)_{j=0}^\infty\|_{\ell^qL^p_\tau}&\le C\|(2^{js}\lambda_j\ast \tilde f)_{j=0}^\infty\|_{\ell^qL^p_\tau};
    \\\label{Eqn::PeeEst::RnFs}
    \|(2^{js}\Pc^{\theta,N}_{\R^n,j}\tilde f)_{j=0}^\infty\|_{L^p_\tau\ell^q}&\le C\|(2^{js}\lambda_j\ast \tilde f)_{j=0}^\infty\|_{L^p_\tau\ell^q},\qquad\text{provided }p<\infty;
    \\\label{Eqn::PeeEst::RnNs}
    \|(2^{js}\Pc^{\theta,N}_{\R^n,j}\tilde f)_{j=0}^\infty\|_{\ell^qM^p_\tau}&\le C\|(2^{js}\lambda_j\ast \tilde f)_{j=0}^\infty\|_{\ell^qM^p_\tau}.
\end{align}
\end{prop}
\begin{proof}
The proof is the same as that for Proposition \ref{Prop::PeeEst}, except that we replace every $\Omega$ by $\R^n$ in the arguments. We leave the details to readers.
\end{proof}

Based on Proposition \ref{Prop::PeeEst}, we can prove a boundedness result of Rychkov-type operators on $\As_{pq}^{s\tau}$-spaces.
\begin{prop}\label{Prop::Boundedness}
Let $\Omega\subset\R^n$ be a special Lipschitz domain and let $\gamma\in\R$. Let $\eta=(\eta_j)_{j=0}^\infty$ and $\theta=(\theta_j)_{j=0}^\infty$ satisfy conditions \ref{Item::LPCond::Momt}, \ref{Item::LPCond::Scal} and \ref{Item::LPCond::Supp} with respect to $\Omega$. We define an operator\footnote{The notation is slightly different from the one in \cite[Theorem 1.5]{ShiYaoExt}.} $T^{\eta,\theta,\gamma}_\Omega$ as
\begin{equation}\label{Eqn::ThmBdd::DefT}
    T^{\eta,\theta,\gamma}_\Omega f:=\sum_{j=0}^\infty 2^{j\gamma}\eta_j\ast(\1_\Omega\cdot(\theta_j\ast f)),\quad f\in\Ss'(\Omega).
\end{equation}
Then for $\As\in\{\Bs,\Fs,\Ns\}$, $0<p,q\le\infty$, $s\in\R$ and $\tau\ge0$ ($p<\infty$ for $\Fs$-cases), we have the boundedness
\begin{equation*}
    T^{\eta,\theta,\gamma}_\Omega:\As_{p,q}^{s,\tau}(\Omega)\to \As_{p,q}^{s-\gamma,\tau}(\R^n).
\end{equation*}
\end{prop}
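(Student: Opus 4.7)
The plan is to bound $\|T^{\eta,\theta,\gamma}_\Omega f\|_{\As_{p,q}^{s-\gamma,\tau}(\R^n)}$ (abbreviate $T=T^{\eta,\theta,\gamma}_\Omega$) using the Littlewood--Paley decomposition given by a family $(\lambda_k)_{k=0}^\infty$ from Definition~\ref{Defn::Intro::DefMorrey}, dominate $\lambda_k\ast Tf$ pointwise by a weighted sum of Peetre maximal functions on $\R^n$ applied to an extension $\tilde f$ of $f$, and conclude via the $\R^n$-analogue of Proposition~\ref{Prop::PeeEst}. For brevity let $\|\cdot\|_{\text{seq}(\R^n)}$ denote whichever of $\ell^qL^p_\tau(\R^n)$, $L^p_\tau\ell^q(\R^n)$, or $\ell^qM^p_\tau(\R^n)$ corresponds to $\As\in\{\Bs,\Fs,\Ns\}$.

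Fix $(\lambda_k)$ satisfying \ref{Item::CLPCond::Four} and \ref{Item::CLPCond::Scal}. Since $\hat\lambda_1=\hat\lambda_0(\cdot/2)-\hat\lambda_0$ vanishes on $\{|\xi|<1\}$, the function $\lambda_1$ has vanishing moments of all orders; combined with the dyadic scaling this forces $\lambda_j=2^{(j-1)n}\lambda_1(2^{j-1}\cdot)$ for $j\ge 2$, so $(\lambda_k)$ also satisfies \ref{Item::LPCond::Momt} and \ref{Item::LPCond::Scal}. Pick any extension $\tilde f\in\Ss'(\R^n)$ of $f$. The cone support condition \ref{Item::LPCond::Supp} on $\theta_j$, together with the inclusion $\Omega-K\subseteq\Omega$ (where $K=\{x_n<-\|\nabla\rho\|_{L^\infty}|x'|\}$), gives $\theta_j\ast f=\theta_j\ast\tilde f$ on $\Omega$, so for every $x\in\R^n$,
\begin{equation*}
    \lambda_k\ast Tf(x)=\sum_{j=0}^\infty 2^{j\gamma}\int_{\R^n}(\lambda_k\ast\eta_j)(x-z)\,\1_\Omega(z)\,(\theta_j\ast\tilde f)(z)\,dz.
\end{equation*}

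From $|\theta_j\ast\tilde f(z)|\le(1+2^j|x-z|)^N\,\Pc^{\theta,N}_{\R^n,j}\tilde f(x)$ and the substitution $y=x-z$, each $j$-th term is bounded by $\Pc^{\theta,N}_{\R^n,j}\tilde f(x)\cdot\int_{\R^n}|\lambda_k\ast\eta_j(y)|(1+2^j|y|)^N\,dy$. Both $(\lambda_k)$ and $(\eta_j)$ satisfy \ref{Item::LPCond::Momt} and \ref{Item::LPCond::Scal}, so Lemma~\ref{Lem::Heideman} applied with weight $(1+2^k|y|)^{N+M}$, combined with the elementary bound $(1+2^j|y|)^N\le 2^{N(j-k)^+}(1+2^k|y|)^N$, shows the integral is $\lesssim 2^{-M|j-k|}$ for arbitrary $M>0$. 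Using $2^{j\gamma}\cdot 2^{k(s-\gamma)}=2^{js}\cdot 2^{(k-j)(s-\gamma)}$ and absorbing the factor $2^{|k-j||s-\gamma|}$ into the decay yields
\begin{equation*}
    |2^{k(s-\gamma)}\lambda_k\ast Tf(x)|\lesssim\sum_{j=0}^\infty 2^{-\delta|j-k|}\cdot 2^{js}\Pc^{\theta,N}_{\R^n,j}\tilde f(x),\qquad \delta:=M-|s-\gamma|\text{ arbitrarily large}.
\end{equation*}

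Taking the $\text{seq}(\R^n)$-norm in $k$, Lemma~\ref{Lem::ASumLem} (valid once $\delta>n\tau$) removes the convolution, and the $\R^n$-analogue of Proposition~\ref{Prop::PeeEst} (applicable for $N>\max(2n/\min(p,q),|s|+n\tau)$) yields
\begin{equation*}
    \|Tf\|_{\As_{p,q}^{s-\gamma,\tau}(\R^n)}\lesssim\|(2^{js}\Pc^{\theta,N}_{\R^n,j}\tilde f)_{j=0}^\infty\|_{\text{seq}(\R^n)}\lesssim\|(2^{js}\lambda_j\ast\tilde f)_{j=0}^\infty\|_{\text{seq}(\R^n)}\approx\|\tilde f\|_{\As_{p,q}^{s,\tau}(\R^n)}.
\end{equation*}
Taking the infimum over extensions $\tilde f$ and invoking \eqref{Eqn::Intro::NormDomain} gives the desired $\|Tf\|_{\As_{p,q}^{s-\gamma,\tau}(\R^n)}\lesssim\|f\|_{\As_{p,q}^{s,\tau}(\Omega)}$. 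The main subtlety is the coupled choice of parameters $N$, $M$, $\delta$ so that the hypotheses of Lemmas~\ref{Lem::Heideman}, \ref{Lem::ASumLem} and of the $\R^n$-Peetre estimate all hold simultaneously; since none of the constants depends on $f$, this is routine to arrange.
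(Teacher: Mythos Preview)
Your proof is correct and follows essentially the same route as the paper: bound $\lambda_k\ast Tf$ pointwise by a weighted sum of Peetre maximal functions via Lemma~\ref{Lem::Heideman}, apply Lemma~\ref{Lem::ASumLem} to collapse the sum, and finish with the $\R^n$-Peetre estimate and an infimum over extensions. The only omission is that the paper first records (citing \cite[Propositions~3.11 and~3.16]{ShiYaoExt}) that the series defining $T^{\eta,\theta,\gamma}_\Omega f$ converges in $\Ss'(\R^n)$ independently of the chosen extension, and your detour through the weight $(1+2^k|y|)^{N+M}$ is unnecessary since Lemma~\ref{Lem::Heideman} applies directly with the weight $(1+2^j|y|)^{N}$ matching the second factor $\eta_j$.
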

\begin{proof}Recall $\Ss'(\Omega)=\{\tilde f|_\Omega:\tilde f\in\Ss'(\R^n)\}$ is defined via restrictions. We see that $T^{\eta,\theta,\gamma}_\Omega:\Ss'(\Omega)\to \Ss'(\R^n)$ is well-defined in the sense that, for every extension $\tilde f\in\Ss'(\R^n)$ of $f$, the summation $\sum_{j=0}^\infty 2^{j\gamma}\eta_j\ast(\1_\Omega\cdot(\theta_j\ast\tilde f))$ converges  $\Ss'(\R^n)$ and does not depend on the choice of $\tilde f$. See \cite[Propositions 3.10 and 3.14]{ShiYaoExt} for example.

Let $\lambda=(\lambda_j)_{j=0}^\infty$ be as in Definition \ref{Defn::Intro::DefMorrey} that defines the $\As_{pq}^{s\tau}$-norms. By Lemma \ref{Lem::Heideman}, for every $j,k\ge0$, $\int_{\R^n}|\lambda_j\ast\eta_k(y)|(1+2^k|y|)^Ndy\lesssim_{\lambda,\eta,N}2^{-N|j-k|}$. Thus by the similar argument to \eqref{Eqn::STInq::PfGamma=Infty}, for every $N>|s-\gamma|$,
\begin{align*}
        2^{j(s-\gamma)} 2^{k\gamma}|\lambda_j\ast \eta_k\ast (\1_\Omega \cdot (\theta_k\ast f))(x)|&\le2^{j(s-\gamma)} 2^{k\gamma}\int_\Omega|\lambda_j\ast \eta_k(y)|(1+2^k|y|)^Ndy\cdot\sup\limits_{t\in\Omega}\frac{|\theta_k\ast f(t)|}{(1+2^k|x-t|)^N}
        \\
        &\lesssim_{\lambda,\eta,N}2^{-(N-|s-\gamma|)|j-k|}2^{ks}(\Pc_{\Omega,k}^{\theta,N}f)(x).
    \end{align*}
    
    Therefore, by Lemma \ref{Lem::ASumLem}, for any $N>|s-\gamma|+n\tau$,
\begin{align}\label{Eqn::PfBdd::Sum1}
    \|(2^{j(s-\gamma)}\lambda_j\ast T^{\eta,\theta,\gamma}_\Omega  f)_{j=0}^\infty\|_{\ell^qL^p_\tau}&\lesssim_{\lambda,\eta,p,q,s,\gamma,\tau,N}\|(2^{ks}\Pc_{\Omega,k}^{\theta,N}f)_{k=0}^\infty\|_{\ell^qL^p_\tau};
    \\\label{Eqn::PfBdd::Sum2}
    \|(2^{j(s-\gamma)}\lambda_j\ast T^{\eta,\theta,\gamma}_\Omega  f)_{j=0}^\infty\|_{L^p_\tau\ell^q}&\lesssim_{\lambda,\eta,p,q,s,\gamma,\tau,N}\|(2^{ks}\Pc_{\Omega,k}^{\theta,N} f)_{k=0}^\infty\|_{L^p_\tau\ell^q},\qquad\text{provided }p<\infty;
    \\\label{Eqn::PfBdd::Sum3}
    \|(2^{j(s-\gamma)}\lambda_j\ast T^{\eta,\theta,\gamma}_\Omega  f)_{j=0}^\infty\|_{\ell^q M^p_\tau}&\lesssim_{\lambda,\eta,p,q,s,\gamma,\tau,N}\|(2^{ks}\Pc_{\Omega,k}^{\theta,N} f)_{k=0}^\infty\|_{\ell^q M^p_\tau}.
\end{align}

Let $\tilde f\in\As_{pq}^{s\tau}(\R^n)$ be an extension of $f$. Clearly $\Pc^{\theta,N}_{\Omega,k} f(x)=\Pc^{\theta,N}_{\Omega,k}\tilde f(x)\le\Pc^{\theta,N}_{\R^n,k}\tilde f(x)$ holds pointwise for $x\in\R^n$. Therefore, by choosing $N>2n/\min(p,q)$ and combining \eqref{Eqn::PfBdd::Sum1} and \eqref{Eqn::PeeEst::RnBs}, we have
\begin{equation*}
    \|T^{\eta,\theta,\gamma}_\Omega f\|_{\Bs_{pq}^{s\tau}(\R^n)}=\|(2^{j(s-\gamma)}\lambda_j\ast T^{\eta,\theta,\gamma}_\Omega  f)_{j=0}^\infty\|_{\ell^qL^p_\tau}\lesssim_{\eta,\theta,\lambda,p,q,s,\gamma,\tau}\|(2^{js}\lambda_j\ast \tilde f)_{j=0}^\infty\|_{\ell^qL^p_\tau}=\|\tilde f\|_{\Bs_{pq}^{s\tau}(\R^n)}.
\end{equation*}
Taking the infimum over all extensions $\tilde f$ of $f$ we get the boundedness $ T^{\eta,\theta,\gamma}_\Omega:\Bs_{p,q}^{s,\tau}(\Omega)\to\Bs_{p,q}^{s-\gamma,\gamma}(\R^n)$. Similarly using \eqref{Eqn::PfBdd::Sum2}, \eqref{Eqn::PeeEst::RnFs} and \eqref{Eqn::PfBdd::Sum3}, \eqref{Eqn::PeeEst::RnNs} we get $T^{\eta,\theta,\gamma}_\Omega:\As_{p,q}^{s,\tau}(\Omega)\to\As_{p,q}^{s-\gamma,\gamma}(\R^n)$ for $\As\in\{\Fs,\Ns\}$.
\end{proof}
\begin{rem}
Under the definition \eqref{Eqn::Intro::NormDomain}, the operator norms of $T^{\eta,\theta,\gamma}_\Omega$ do not depend\footnote{It can depend on the upper bound of $\|\nabla\rho\|_{L^\infty}$, which is bounded by $\inf\{-\frac{x_n}{|x'|}:(x',x_n)\in\supp\phi_j\}$ where $\phi\in\{\eta,\theta\}$ and $j\ge0$.} on $\Omega$. This is due to the same reason as mentioned in \cite[Remark 3.11]{ShiYaoExt}:

One can see that the constants in Proposition \ref{Prop::PeeEst} depend on everything except on $\Omega$. The same hold for the implied constants in \eqref{Eqn::PfBdd::Sum1}, \eqref{Eqn::PfBdd::Sum2} and \eqref{Eqn::PfBdd::Sum3}. After the pointwise inequality $\Pc^{\theta,N}_{\Omega,k} f\le\Pc^{\theta,N}_{\R^n,k}\tilde f$, it remains to estimate $(2^{js}\Pc^{\theta,N}_{\R^n,j}\tilde f)_{j=0}^\infty$ (which is Proposition \ref{Prop::PeeEstRn}), where $\Omega$ is not involved.
\end{rem}

\begin{cor}[\cite{YSYIntBesovMorrey,ZhuoTriebelType,ZhuoBesovType}]\label{Cor::BddExt}
Let $\Omega\subset\R^n$ be a special Lipschitz domain. Let $\phi=(\phi_j)_{j=0}^\infty$ and $\psi=(\psi_j)_{j=0}^\infty$ be as in the assumption and conclusion of Lemma \ref{Lem::ExistPsi} with respect to $\Omega$. Then the Rychkov's extension operator
\begin{equation}\label{Eqn::ExtOp}
E_\Omega f=E^{\psi,\phi}_\Omega f: =\sum_{j=0}^\infty\psi_j\ast(\1_{\Omega}\cdot(\phi_j\ast f)),\quad f\in\Ss'(\Omega),
\end{equation}
is well-defined and has boundedness $E_\Omega:\As_{pq}^{s\tau}(\Omega)\to\As_{pq}^{s\tau}(\R^n)$ for $\As\in\{\Bs,\Fs,\Ns\}$ and all $0<p,q\le\infty$, $s\in\R$, $\tau\ge0$ ($p<\infty$ for $\Fs$-cases).
\end{cor}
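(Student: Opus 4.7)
The plan is to recognize the operator $E_\Omega$ in \eqref{Eqn::ExtOp} as the special case $T^{\psi,\phi,0}_\Omega$ of the operator \eqref{Eqn::ThmBdd::DefT} with $\gamma=0$, which reduces the boundedness claim to a direct application of Proposition \ref{Prop::Boundedness}. The well-definedness of the sum in $\Ss'(\R^n)$, and its independence from the chosen extension $\tilde f\in\Ss'(\R^n)$ of $f$, is already recorded in \cite[Propositions 3.11 and 3.16]{ShiYaoExt}, which I would simply quote rather than reprove.

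The one hypothesis of Proposition \ref{Prop::Boundedness} that is not immediately handed to us is the support condition \ref{Item::LPCond::Supp} for $\psi$, since Lemma \ref{Lem::ExistPsi} records only \ref{Item::LPCond::Momt} and \ref{Item::LPCond::Scal} explicitly. To verify it, I would inspect the explicit formulas from the proof of Lemma \ref{Lem::ExistPsi}: each $\hat\psi_j$ is a polynomial in dilates of $\hat\phi_0$, so on the spatial side $\psi_j$ is a finite linear combination of iterated convolutions of dilates of $\phi_0$. Since $\phi_0$ is supported in the closed convex cone $K:=\{(x',x_n):x_n\le-\|\nabla\rho\|_{L^\infty}|x'|\}$, and $K$ is closed under addition and positive dilation, each $\psi_j$ is supported in $K$ as well. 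I expect this support/cone bookkeeping to be the only real step in the argument; everything else is a citation.

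With the three conditions in hand for both $\eta=\psi$ and $\theta=\phi$, applying Proposition \ref{Prop::Boundedness} with $\gamma=0$ delivers $E_\Omega=T^{\psi,\phi,0}_\Omega:\As_{pq}^{s\tau}(\Omega)\to\As_{pq}^{s\tau}(\R^n)$ for $\As\in\{\Bs,\Fs,\Ns\}$ across the full range of indices, as desired. For completeness, the fact that $E_\Omega$ is genuinely an extension, i.e.\ $E_\Omega f|_\Omega=f$, is not literally part of the Corollary's statement but can be read off from $\sum_j\psi_j\ast\phi_j=\delta_0$ combined with $\supp\psi_j\subset K$ and the cone inclusion $\Omega-K\subseteq\Omega$ (easily checked from the Lipschitz bound on $\rho$), which together allow one to drop the cutoff $\1_\Omega$ when evaluating at a point $x\in\Omega$.
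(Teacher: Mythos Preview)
Your approach is correct and essentially identical to the paper's: both identify $E_\Omega=T^{\psi,\phi,0}_\Omega$ and invoke Proposition~\ref{Prop::Boundedness}, and both note $E_\Omega f|_\Omega=\sum_j\psi_j\ast\phi_j\ast f=f$. The only difference is that you explicitly verify the support hypothesis \ref{Item::LPCond::Supp} for $\psi$ (via the cone-convolution argument from the explicit formulas in Lemma~\ref{Lem::ExistPsi}), whereas the paper treats this as implicit---indeed the paper already uses $\supp\psi_j\subset K$ without comment in the proof of Lemma~\ref{Lem::STInq}, so your added justification is a welcome clarification rather than a departure in method.
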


\begin{proof}$E_\Omega$ is an extension operator because by assumption $E_\Omega f|_\Omega=\sum_{j=0}^\infty\psi_j\ast\phi_j \ast f=f$.
The boundedness is immediate since $E_\Omega=T^{\psi,\phi,0}_\Omega$ from \eqref{Eqn::ThmBdd::DefT}.
\end{proof}

\begin{rem}
Corollary \ref{Cor::BddExt} is not new. See \cite[Proposition 4.13]{YSYIntBesovMorrey} for $\As=\Ns$, \cite[Section 4]{ZhuoTriebelType} for $\As=\Fs$ and \cite[Section 4]{ZhuoBesovType} for $\As=\Bs$. For the proof we also refer \cite[Theorem 3.6]{GHSMorreyExtension} to readers.
\end{rem}

The key to prove Theorem \ref{Thm::LPNorm} is to use the following analog of \cite[Theorem 2.3]{ExtensionLipschitz}. 
\begin{prop}[Characterizations via Peetre's maximal functions]\label{Prop::NormMax}
Let $\Omega\subset\R^n$ be a special Lipschitz domain and let $\phi=(\phi_j)_{j=0}^\infty$ be a Littlewood-Paley family associated with $\Omega$. Then for $0<p,q\le\infty$, $s\in\R$ and $\tau\ge0$ ($p<\infty$ for $\Fs$-cases), we have the following intrinsic characterizations: for every $N>\max(\frac{2n}{\min(p,q)},|s|+n\tau)$,
\begin{align}\label{Eqn::NormMax::Bs}
    \|f\|_{\Bs_{pq}^{s\tau}(\Omega)}&\approx_{\phi,p,q,s,\tau,N}\big\|\big(2^{js}\1_\Omega\cdot(\Pc^{\phi,N}_{\Omega,j}f)\big)_{j=0}^\infty\big\|_{\ell^qL^p_\tau};
    \\\label{Eqn::NormMax::Fs}
    \|f\|_{\Fs_{pq}^{s\tau}(\Omega)}&\approx_{\phi,p,q,s,\tau,N}\big\|\big(2^{js}\1_\Omega\cdot(\Pc^{\phi,N}_{\Omega,j}f)\big)_{j=0}^\infty\big\|_{L^p_\tau\ell^q},\qquad\text{provided }p<\infty;
    \\
    \label{Eqn::NormMax::Ns}
    \|f\|_{\Ns_{pq}^{s\tau}(\Omega)}&\approx_{\phi,p,q,s,\tau,N}\big\|\big(2^{js}\1_\Omega\cdot(\Pc^{\phi,N}_{\Omega,j}f)\big)_{j=0}^\infty\big\|_{\ell^q M^p_\tau}.
\end{align}
\end{prop}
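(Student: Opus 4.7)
The plan is to prove the three equivalences in parallel, since the scheme is uniform across the ambient sequence spaces $\ell^q L^p_\tau$, $L^p_\tau \ell^q$, and $\ell^q M^p_\tau$.

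For the upper bound $\|(2^{js}\1_\Omega\Pc^{\phi,N}_{\Omega,j}f)_j\|\lesssim\|f\|_{\As_{pq}^{s\tau}(\Omega)}$, I would take an arbitrary extension $\tilde f\in\As_{pq}^{s\tau}(\R^n)$ with $\tilde f|_\Omega=f$. Since the sup in $\Pc^{\phi,N}_{\Omega,j}f(x)$ runs over $y\in\Omega$, where $\phi_j\ast f(y)=\phi_j\ast\tilde f(y)$ by the support condition \ref{Item::LPCond::Supp} together with the property $\Omega-K\subseteq\Omega$, one has pointwise on $\R^n$ the bound $\Pc^{\phi,N}_{\Omega,j}f(x)\le\Pc^{\phi,N}_{\R^n,j}\tilde f(x)$. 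Combining this with the trivial monotonicity $\|\1_\Omega G\|\le\|G\|$ (valid for each of the three ambient norms) and the $\R^n$ Peetre-maximal estimates \eqref{Eqn::PeeEst::RnBs}--\eqref{Eqn::PeeEst::RnNs} just established yields
\begin{equation*}
\|(2^{js}\1_\Omega\Pc^{\phi,N}_{\Omega,j}f)_j\|\le\|(2^{js}\Pc^{\phi,N}_{\R^n,j}\tilde f)_j\|\lesssim\|\tilde f\|_{\As_{pq}^{s\tau}(\R^n)},
\end{equation*}
and taking the infimum over admissible $\tilde f$ gives the claim.

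For the lower bound $\|f\|_{\As_{pq}^{s\tau}(\Omega)}\lesssim\|(2^{js}\1_\Omega\Pc^{\phi,N}_{\Omega,j}f)_j\|$ I would use Rychkov's extension $E_\Omega$ from Corollary \ref{Cor::BddExt}, giving $\|f\|_{\As_{pq}^{s\tau}(\Omega)}\le\|E_\Omega f\|_{\As_{pq}^{s\tau}(\R^n)}$. I expand $\lambda_j\ast E_\Omega f=\sum_{k\ge 0}\lambda_j\ast\psi_k\ast(\1_\Omega\cdot(\phi_k\ast f))$, and use the trivial bound $|\phi_k\ast f(y)|\le\Pc^{\phi,N}_{\Omega,k}f(y)$ for $y\in\Omega$ (take $y$ in the sup defining the Peetre operator) to majorize each summand pointwise on $\R^n$ by $\int|\lambda_j\ast\psi_k(x-y)|\,\1_\Omega(y)\Pc^{\phi,N}_{\Omega,k}f(y)\,dy$. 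The almost-orthogonality estimate of Lemma \ref{Lem::Heideman} together with the dyadic annulus decomposition underlying Lemma \ref{Lem::PeeToHL} then converts this convolution against a decaying kernel into $2^{-M|j-k|}\M(\1_\Omega\Pc^{\phi,N}_{\Omega,k}f)(x)$ for any $M>0$. Multiplying by $2^{js}=2^{ks}\cdot 2^{(j-k)s}$, summing over $k$ via Lemma \ref{Lem::ASumLem} (choosing $M$ so that $M-|s|>n\tau$), and then invoking the vector-valued Fefferman--Stein estimate on the appropriate Morrey-type sequence space --- following the pattern of the proof of Proposition \ref{Prop::PeeEst}, with $\M$ replaced by $\M(|\cdot|^\gamma)^{1/\gamma}$ for $\gamma<\min(p,q)$ when $p$ or $q\le 1$ --- yields
\begin{equation*}
\|E_\Omega f\|_{\As_{pq}^{s\tau}(\R^n)}\lesssim\|(2^{ks}\1_\Omega\Pc^{\phi,N}_{\Omega,k}f)_k\|,
\end{equation*}
completing the lower bound.

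The main technical obstacle will be executing the vector-valued Fefferman--Stein bound uniformly on all three Morrey-type sequence spaces, and in particular on $\ell^qM^p_\tau$: one must invoke the Morrey boundedness of $\M$ from \cite[Lemma 2.5]{TangXuMorrey} in the nontrivial range $\tau<1/p$ and treat the degenerate ranges $\tau=1/p$ (where $M^p_\tau=L^\infty$) and $\tau>1/p$ (where $M^p_\tau=\{0\}$) separately, exactly as in the proof of Proposition \ref{Prop::PeeEst}. All remaining manipulations (Heideman-type decay, dyadic decomposition, Lemma \ref{Lem::ASumLem}) are routine adaptations of techniques already deployed earlier in the paper.
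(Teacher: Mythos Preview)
Your upper-bound argument matches the paper's. For the lower bound, however, your route diverges and contains a gap in the range $\min(p,q)\le 1$. After reaching the pointwise majorant $\sum_k 2^{-M|j-k|}\M(\1_\Omega\Pc^{\phi,N}_{\Omega,k}f)(x)$ (which already needs \emph{pointwise} decay of $\lambda_j\ast\psi_k$, sharper than what Lemma~\ref{Lem::Heideman} states), you still require the vector-valued maximal inequality $\|(\M g_k)_k\|\lesssim\|(g_k)_k\|$ on the ambient sequence space, and that fails for $p\le 1$ (and for $q\le 1$ in the $L^p_\tau\ell^q$ case). Your proposed fix---replacing $\M$ by $\M(|\cdot|^\gamma)^{1/\gamma}$ with $\gamma<\min(p,q)\le 1$---goes the wrong way: by Jensen's inequality for the concave map $t\mapsto t^\gamma$ one has $(\M(g^\gamma))^{1/\gamma}\le\M g$, so this does not dominate $\M g_k$. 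The $\gamma$-trick in Proposition~\ref{Prop::PeeEst} works only because Lemma~\ref{Lem::STInq} produces $|\phi_k\ast f|^\gamma$ inside the integral from the outset; here your integrand $g_k=\1_\Omega\Pc^{\phi,N}_{\Omega,k}f$ appears linearly and cannot be analogously downgraded.

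The paper sidesteps this obstruction. It uses the pointwise inequality $|\phi_k\ast f(y)|\le(1+2^k|x-y|)^N\Pc^{\phi,N}_{\Omega,k}f(x)$ together with Lemma~\ref{Lem::Heideman} and Lemma~\ref{Lem::ASumLem} (this is exactly \eqref{Eqn::PfBdd::Sum1}--\eqref{Eqn::PfBdd::Sum3}, no maximal function needed) to obtain $\|E_\Omega f\|\lesssim\|(2^{ks}\Pc^{\phi,N}_{\Omega,k}f)_k\|$ with no $\1_\Omega$. The remaining passage to $\|(2^{ks}\1_\Omega\Pc^{\phi,N}_{\Omega,k}f)_k\|$ is then handled by a geometric reflection: the fold map $L_\Omega(x',x_n)=(x',2\rho(x')-x_n)$ for $x\notin\Omega$ satisfies $|L_\Omega(x)-y|\lesssim_\Omega|x-y|$ for all $y\in\Omega$, whence $\Pc^{\phi,N}_{\Omega,k}f(x)\lesssim_\Omega\Pc^{\phi,N}_{\Omega,k}f(L_\Omega(x))$, and a change of variables (with uniformly bounded multiplicity of $L_\Omega^{-1}$ on dyadic cubes) finishes. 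This reflection argument is the missing ingredient in your proposal.
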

\begin{rem}
\eqref{Eqn::NormMax::Bs} and \eqref{Eqn::NormMax::Fs} are not new as well. The case $\As=\Fs$ is done in \cite[Theorem 1.7]{SunZhuoExtension}, where a more general setting is considered. See also \cite[Proof of Theorem 3.6, Step 2]{GHSMorreyExtension} for a proof of $\As\in\{\Bs,\Fs\}$.

As already mentioned in Remark \ref{Rmk::RmkN}, it is possible that the assumption of $N$ can be weakened.
\end{rem}
\begin{proof}[Proof of Proposition \ref{Prop::NormMax}]
Let $\lambda=(\lambda_j)_{j=0}^\infty$ be as in Definition \ref{Defn::Intro::DefMorrey} that defines the $\As_{pq}^{s\tau}$-norms. We only prove \eqref{Eqn::NormMax::Fs} since the proof of \eqref{Eqn::NormMax::Bs} and \eqref{Eqn::NormMax::Ns} are the same by replacing $L^p_\tau\ell^q$ with $\ell^qL^p_\tau$ and $\ell^q M^p_\tau$, and including the discussion of $p=\infty$.

\medskip
\noindent($\gtrsim$) For $f\in\Fs_{pq}^{s\tau}(\Omega)$, let $\tilde f\in\Fs_{pq}^{s\tau}(\R^n)$ be an extension of $f$. We see that pointwisely 
$$(\1_\Omega\cdot\Pc^{\phi,N}_{\Omega,j}f)(x)\le\Pc^{\phi,N}_{\Omega,j}f(x)= \Pc^{\phi,N}_{\Omega,j}\tilde f(x)\le \Pc^{\phi,N}_{\R^n,j}\tilde f(x),\quad j\ge0,\quad x\in\R^n.$$
Thus by Proposition \ref{Prop::PeeEst},
\begin{equation*}
    \big\|\big(2^{js}\1_\Omega\cdot(\Pc^{\phi,N}_jf)\big)_{j=0}^\infty\big\|_{L^p_\tau\ell^q}\le \big\|\big(2^{js}\Pc^{\phi,N}_{\Omega,j}\tilde f\big)_{j=0}^\infty\big\|_{L^p_\tau\ell^q}\lesssim_{\lambda,\phi,p,q,s,\gamma,\tau,N}\big\|\big(2^{js}\lambda_j\ast\tilde f\big)_{j=0}^\infty\big\|_{L^p_\tau\ell^q}=\|\tilde f\|_{\Fs_{pq}^{s\tau}(\R^n)}.
\end{equation*}
Taking infimum over all extensions $\tilde f$ of $f$, we get  $\|f\|_{\Fs_{pq}^{s\tau}(\Omega)}\gtrsim\big\|\big(2^{js}\1_\Omega\cdot(\Pc^{\phi,N}_{\Omega,j}f)\big)_{j=0}^\infty\big\|_{L^p_\tau\ell^q}$.

\medskip
\noindent($\lesssim$) By Corollary \ref{Cor::BddExt} we have $\|f\|_{\Fs_{pq}^{s\tau}(\Omega)}\approx\|E_\Omega f\|_{\Fs_{pq}^{s\tau}(\R^n)}=\|(2^{js}\lambda_j\ast E_\Omega f)_{j=0}^\infty\|_{L^p_\tau\ell^q}$. Therefore using \eqref{Eqn::PfBdd::Sum1} with the fact that $E_\Omega=T^{\psi,\phi,0}_\Omega$, 
\begin{equation}\label{Eqn::NormMax::Tmp1}
    \|(2^{js}\lambda_j\ast E_\Omega f)_{j=0}^\infty\|_{L^p_\tau\ell^q}=\|(2^{js}\lambda_j\ast T^{\psi,\phi,0}_\Omega f)_{j=0}^\infty\|_{L^p_\tau\ell^q}\lesssim_{\psi,\phi,\lambda,p,q,s,\tau}\|(2^{js}\Pc^{\phi,N}_{\Omega,j} f)_{j=0}^\infty\|_{L^p_\tau\ell^q}.
\end{equation}

Write $\Omega=\{(x',x_n):x_n>\rho(x')\}$. We define a ``fold map'' $L=L_\Omega:\R^n\twoheadrightarrow\overline\Omega$ as
\begin{equation*}
    L(x):=x\quad\text{if }x\in\Omega;\qquad L(x):=(x',2\rho(x')-x_n),\quad\text{if }x\notin\Omega.
\end{equation*}

 Recall $\Omega=\{x_n>\rho(x')\}$. By direct computation, we have
\begin{equation}\label{Eqn::NormMax::Tmp1.5}
    \textstyle|L(x)-y|\le \big(\|\nabla\rho\|_{L^\infty}+\sqrt{1+\|\nabla\rho\|_{L^\infty}^2}\big)^2|x-y|\lesssim_\Omega|x-y|\qquad x\in\R^n,\quad y\in\Omega.
\end{equation}

Therefore \begin{equation*}
    \Pc_{\Omega,j}^{\phi,N}f(x)=\sup_{y\in\Omega}\frac{|\phi_j\ast f(y)|}{(1+2^j|x-y|)^N}\lesssim_{\Omega,N}\sup_{y\in\Omega}\frac{|\phi_j\ast f(y)|}{(1+2^j|L(x)-y|)^N}=\big(\Pc_{\Omega,j}^{\phi,N}f\big)\big(L(x)\big),\quad x\in\R^n.
\end{equation*}

Clearly for $0<p\le\infty$ we have the following estimate for cube $Q\in\Qc$ and function $g\in L^p_\loc(\Omega)$:
\begin{equation*}
    \|g\circ L\|_{L^p(Q)}\lesssim_p\|g\|_{L^p(\Omega\cap L^{-1}(Q))}\lesssim_p\sum_{P\in \mathcal I_Q}\|\1_\Omega\cdot g\|_{L^p(P)},\quad\text{where }\mathcal I_Q:=\{P\in\Qc:|P|=|Q|,\ P\cap L^{-1}(Q)\neq\varnothing\}.
\end{equation*}

By \eqref{Eqn::NormMax::Tmp1.5} we have control of the cardinality $\# \mathcal I_Q\lesssim_n(1+\|\nabla\rho\|_{L^\infty})^{2n}\lesssim_\Omega1$, which is uniform in $Q\in\Qc$. Therefore,
\begin{equation}\label{Eqn::NormMax::Tmp2}
    \|(2^{js}\Pc^{\phi,N}_{\Omega,j} f)_{j=0}^\infty\|_{L^p_\tau\ell^q}\lesssim_N\big\|\big(2^{js}(\Pc^{\phi,N}_{\Omega,j} f)\circ L\big)_{j=0}^\infty\big\|_{L^p_\tau\ell^q}\lesssim_{p,q,\Omega}\big\|\big(2^{js}\1_\Omega\cdot(\Pc^{\phi,N}_{\Omega,j} f)\big)_{j=0}^\infty\big\|_{L^p_\tau\ell^q}.
\end{equation}
Combining \eqref{Eqn::NormMax::Tmp1} and \eqref{Eqn::NormMax::Tmp2} we get $\|f\|_{\Fs_{pq}^{s\tau}(\Omega)}\lesssim\big\|\big(2^{js}\1_\Omega\cdot(\Pc^{\phi,N}_{\Omega,j}f)\big)_{j=0}^\infty\big\|_{L^p_\tau\ell^q}$, finishing the proof.
\end{proof}

We can now prove Theorem \ref{Thm::LPNorm}:
\begin{proof}[Proof of Theorem \ref{Thm::LPNorm}]The $\Fs_{\infty q}^s$-cases follow immediately from the $\Fs_{pq}^{s\tau}$-cases using \eqref{Eqn::Intro::CharFInftyQ}.

Fix a $N>\max(2n/\min(p,q),|s|+n\tau)$. We only prove the $\Fs_{pq}^{s\tau}$-cases. The proofs of the $\Bs_{pq}^{s\tau}$-cases and the $\Ns_{pq}^{s\tau}$-cases are the same, except that we replace every $L^p_\tau\ell^q$ with $\ell^qL^p_\tau$ and $\ell^q M^p_\tau$.

By Proposition \ref{Prop::NormMax} we have $\|f\|_{\Fs_{pq}^{s\tau}(\Omega)}\approx\big\|\big(2^{js}\1_\Omega\cdot(\Pc^{\phi,N}_{\Omega,j}f)\big)_{j=0}^\infty\big\|_{L^p_\tau\ell^q}$. Therefore, it suffices to show that $\big\|\big(2^{js}\1_\Omega\cdot(\Pc^{\phi,N}_{\Omega,j}f)\big)_{j=0}^\infty\big\|_{L^p_\tau\ell^q}\approx \big\|\big(2^{js}\1_\Omega\cdot(\phi_j\ast f)\big)_{j=0}^\infty\big\|_{L^p_\tau\ell^q}$.

Clearly $\big\|\big(2^{js}\1_\Omega\cdot(\Pc^{\phi,N}_{\Omega,j}f)\big)_{j=0}^\infty\big\|_{L^p_\tau\ell^q}\ge \big\|\big(2^{js}\1_\Omega\cdot(\phi_j\ast f)\big)_{j=0}^\infty\big\|_{L^p_\tau\ell^q}$ since $\phi_j\ast f(x)\le \Pc^{\phi,N}_{\Omega,j}f(x)$ holds for all $f\in\Ss'(\Omega)$, $x\in\Omega$ and $j\ge0$.
The converse $\big\|\big(2^{js}\1_\Omega\cdot(\Pc^{\phi,N}_{\Omega,j}f)\big)_{j=0}^\infty\big\|_{L^p_\tau\ell^q}\lesssim_{\phi,p,q,s,\tau,N} \big\|\big(2^{js}\1_\Omega\cdot(\phi_j\ast f)\big)_{j=0}^\infty\big\|_{L^p_\tau\ell^q}$ follows from \eqref{Eqn::PeeEst::SpeBs}. Thus, we prove the $\Fs_{pq}^{s\tau}$-cases.
\end{proof}

We have the immediate analogy of \cite[Theorem 1.1]{YangYuan10} on Lipschitz domains: 
\begin{cor}\label{Cor::NormMaxP}
Keeping the assumptions in Proposition \ref{Prop::NormMax}, we have the following intrinsic characterizations: for every $N>\max(2n/\min(p,q),|s|+n\tau)$,
\begin{align*}
    \|f\|_{\Bs_{pq}^{s\tau}(\Omega)}&\approx_{\phi,p,q,s,\tau,N}\sup_{Q_{J,v}\in\Qc}2^{nJ\tau}\Big(\sum_{j=\max(0,J)}^\infty2^{jsq}\|\Pc^{\phi,N}_{(Q_{J,v}\cap\Omega),j}f\|_{L^p(Q_{J,v}\cap\Omega)}^q\Big)^\frac1q;
    \\
    \|f\|_{\Fs_{pq}^{s\tau}(\Omega)}&\approx_{\phi,p,q,s,\tau,N}\sup_{Q_{J,v}\in\Qc}2^{nJ\tau}\Big(\int_{ Q_{J,v}\cap\Omega}\Big(\sum_{j=\max(0,J)}^\infty2^{jsq}|\Pc^{\phi,N}_{(Q_{J,v}\cap\Omega),j}f(x)|^q\Big)^\frac pqdx\Big)^\frac1p,\quad \text{provided }p<\infty;
    \\
    \|f\|_{\Ns_{pq}^{s\tau}(\Omega)}&\approx_{\phi,p,q,s,\tau,N}\Big(\sum_{j=0}^\infty\sup_{Q_{J,v}\in\Qc}2^{nJ\tau q+jsq}\|\Pc^{\phi,N}_{(Q_{J,v}\cap\Omega),j}f\|_{L^p(Q_{J,v}\cap\Omega)}^q\Big)^\frac 1q.
\end{align*}
\end{cor}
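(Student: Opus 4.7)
The plan is to derive the corollary from Proposition \ref{Prop::NormMax} by establishing a two-sided equivalence between the expressions on the RHS of the corollary (with localized Peetre operators $\Pc^{\phi,N}_{(Q_{J,v}\cap\Omega),j}f$) and those appearing in Proposition \ref{Prop::NormMax} (with global $\Pc^{\phi,N}_{\Omega,j}f$).

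\emph{Easy direction.} The inequality ``corollary's RHS $\lesssim \|f\|_{\As_{pq}^{s\tau}(\Omega)}$'' is immediate: since the supremum defining $\Pc^{\phi,N}_{(Q_{J,v}\cap\Omega),j}f$ is taken over the subset $Q_{J,v}\cap\Omega \subset \Omega$, we have the pointwise bound $\Pc^{\phi,N}_{(Q_{J,v}\cap\Omega),j}f(x) \le \Pc^{\phi,N}_{\Omega,j}f(x)$. Each term of the corollary's RHS is thus dominated by the corresponding term of Proposition \ref{Prop::NormMax}, which in turn is $\approx \|f\|_{\As_{pq}^{s\tau}(\Omega)}$.

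\emph{Hard direction.} For $\|f\|_{\As_{pq}^{s\tau}(\Omega)} \lesssim \text{(corollary's RHS)}$, I reduce via Proposition \ref{Prop::NormMax} to bounding $\|(2^{js}\1_\Omega \cdot \Pc^{\phi,N}_{\Omega,j}f)_j\|_{(*)}$ (with $(*) \in \{\ell^qL^p_\tau,\ L^p_\tau\ell^q,\ \ell^qM^p_\tau\}$) by the corollary's RHS, and adapt the proof of Proposition \ref{Prop::PeeEst} by replacing its appeal to the Hardy--Littlewood maximal function by a decomposition into localized Peetre norms. Apply Lemma \ref{Lem::STInq} with $\theta = \phi$ for $\gamma \in (0, \min(p, q))$ satisfying $N\gamma > 2n$ to get
\[
\Pc^{\phi,N}_{\Omega,j}f(x)^\gamma \lesssim \sum_{k=0}^\infty 2^{-N\gamma|j-k|}\int_\Omega \frac{2^{kn}|\phi_k\ast f(y)|^\gamma\,dy}{(1+2^k|x-y|)^{N\gamma}}.
\]
Decompose $\Omega = \bigsqcup_w (Q_{k,w}\cap\Omega)$ at scale $k$. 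For $y \in Q_{k,w}\cap\Omega$ use the trivial pointwise bound $|\phi_k\ast f(y)| \le \Pc^{\phi,N}_{(Q_{k,w}\cap\Omega),k}f(y)$, and extract a decay factor $(1+|v_k(x)-w|)^{-N\gamma + n}$ from the denominator (paralleling Lemma \ref{Lem::PeeToHL}, with $v_k(x)$ the index of the level-$k$ cube containing $x$). Since $\gamma < p$, Hölder yields
\[
\int_{Q_{k,w}\cap\Omega} \Pc^{\phi,N}_{(Q_{k,w}\cap\Omega),k}f(y)^\gamma\,dy \le |Q_{k,w}\cap\Omega|^{1-\gamma/p}\big\|\Pc^{\phi,N}_{(Q_{k,w}\cap\Omega),k}f\big\|_{L^p(Q_{k,w}\cap\Omega)}^\gamma.
\]
Dispatching $\sum_k 2^{-N\gamma|j-k|}$ via Lemma \ref{Lem::ASumLem} (after absorbing $2^{js\gamma}$ using $N > |s|+n\tau$) leaves a weighted sum over $(k, w)$ of the localized $L^p$-norms, each of which contributes to the corollary's sup at the dyadic cube $(J', v') = (k, w)$.

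\emph{Main obstacle.} The delicate step will be propagating the Morrey-type outer supremum (over $Q_{J,v}$) through the above estimate so that the weighted sum over $(k, w)$ reassembles into the single $\sup_{(J',v')}$ appearing in the corollary's RHS. This parallels the use of Fefferman--Stein's vector-valued maximal inequality in the proof of Proposition \ref{Prop::PeeEst} (its last step for \eqref{Eqn::PeeEst::SpeFs}) and requires a careful accounting of the interaction between the combined weight $2^{nk}\cdot|Q_{k,w}\cap\Omega|^{1-\gamma/p}$ and the decay factor $(1+|v_k(x)-w|)^{-(N\gamma - n)}$ under $L^{p/\gamma}_{\tau\gamma}\ell^{q/\gamma}$-type norms, together with the separate treatment of $k < J$ (where the level-$k$ cubes are larger than $Q_{J,v}$, so the weights must be rebalanced).
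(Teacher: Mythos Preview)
Your easy direction is fine and matches the paper. For the hard direction, however, you are working far too hard and have missed the one-line argument the paper uses. You reduce via Proposition~\ref{Prop::NormMax} to bounding the \emph{global} Peetre expression by the corollary's RHS, and then attempt to rebuild the machinery of Proposition~\ref{Prop::PeeEst} in localized form. But you never need to control the global Peetre maximal function at all: instead of Proposition~\ref{Prop::NormMax}, use Theorem~\ref{Thm::LPNorm}. For $x\in Q_{J,v}\cap\Omega$ one has the trivial pointwise chain
\[
|\phi_j\ast f(x)|\ \le\ \Pc^{\phi,N}_{(Q_{J,v}\cap\Omega),j}f(x)\ \le\ \Pc^{\phi,N}_{\Omega,j}f(x),
\]
so the corollary's RHS is sandwiched, term by term (for each fixed $Q_{J,v}$ and $j$), between the RHS of Theorem~\ref{Thm::LPNorm} and the RHS of Proposition~\ref{Prop::NormMax}. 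Since both of those are already known to be $\approx\|f\|_{\As_{pq}^{s\tau}(\Omega)}$, the corollary follows immediately. You actually invoke the lower inequality $|\phi_k\ast f(y)|\le\Pc^{\phi,N}_{(Q_{k,w}\cap\Omega),k}f(y)$ yourself in your Step~3, but only deep inside the integral of Lemma~\ref{Lem::STInq}; applying it directly to the expression in Theorem~\ref{Thm::LPNorm} eliminates the entire ``main obstacle'' you flag.
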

\begin{proof}
Since $|\phi_j\ast f(x)|\le\Pc^{\phi,N}_{(Q_{J,v}\cap\Omega),j}f(x)\le \Pc^{\phi,N}_{\Omega,j}f(x)$ pointwisely for every $Q_{J,v}\in\mathcal Q$ and $x\in Q_{J,v}\cap\Omega$, the results follow immediately by combining Theorem \ref{Thm::LPNorm} and Proposition \ref{Prop::NormMax}. 
\end{proof}

\begin{rem}\label{Rmk::RmkBddDom}
By the standard partition of unity argument, we can give the analogy of Theorem \ref{Thm::LPNorm} on a bounded Lipschitz domain. An example is the following:
\begin{align}\label{Eqn::BddDomLPNorm::B}
    \|f\|_{\Bs_{pq}^{s\tau}(\Omega)}&\approx\sum_{\nu=1}^N\|(2^{js}\1_{\Omega\cap U_\nu}\cdot(\phi_j^\nu\ast(\chi_\nu f)))_{j=0}^\infty\|_{\ell^qL^p_\tau};
    \\
    \label{Eqn::BddDomLPNorm::F}
    \|f\|_{\Fs_{pq}^{s\tau}(\Omega)}&\approx\sum_{\nu=1}^N\|(2^{js}\1_{\Omega\cap U_\nu}\cdot(\phi_j^\nu\ast(\chi_\nu f)))_{j=0}^\infty\|_{L^p_\tau\ell^q};
    \\
    \label{Eqn::BddDomLPNorm::N}
    \|f\|_{\Ns_{pq}^{s\tau}(\Omega)}&\approx\sum_{\nu=1}^N\|(2^{js}\1_{\Omega\cap U_\nu}\cdot(\phi_j^\nu\ast(\chi_\nu f)))_{j=0}^\infty\|_{\ell^qM^p_\tau};
\end{align}

Here $\{U_\nu,(\phi_j^\nu)_{j=0}^\infty,\chi_\nu\}_{\nu=1}^N$ satisfy the following:
\begin{itemize}
    \item $\{U_\nu\}_{\nu=1}^N$ is an open cover of $\overline{\Omega}$, and there are cones $K_\nu\subset\R^n$ such that $U_\nu\cap(\Omega-K_\nu)\subseteq U_\nu\cap\Omega$ for each $\nu=1,\dots,N$.
    \item For $\nu=1,\dots,N$, $(\phi_j^\nu)_{j=0}^\infty$ satisfies \ref{Item::LPCond::Momt} - \ref{Item::LPCond::AppId} in Definition \ref{Defn::Intro::Basic}, with support condition $\supp\phi_j^\nu\subset K_\nu$ for $j\ge0$.
    \item $\chi_\nu\in C_c^\infty(U_\nu)$ for $\nu=1,\dots,N$, and satisfy\footnote{In fact we can relax the condition to $\sum_{\nu=1}^N\chi_\nu|_{\overline{\Omega}}>c$ for some $c>0$.} $\sum_{\nu=1}^N\chi_\nu|_{\overline{\Omega}}\equiv1$.
\end{itemize}

To prove \eqref{Eqn::BddDomLPNorm::B}, \eqref{Eqn::BddDomLPNorm::F} and \eqref{Eqn::BddDomLPNorm::N} the only thing we need are the following standard results ($p<\infty$ for $\Fs$-cases):
\begin{enumerate}[label=($\Psi$.\alph*)]
    \item \label{Item::Property::MultBdd} Let $\chi\in C_c^\infty(\R^n)$. Then $[\tilde f\mapsto \chi \tilde f]:\As_{pq}^{s\tau}(\R^n)\to\As_{pq}^{s\tau}(\R^n)$ is bounded.
    \item \label{Item::Property::CompBdd} Let $\Phi$ be an invertible affine linear transform. Then $[\tilde f\mapsto \tilde f\circ\Phi]:\As_{pq}^{s\tau}(\R^n)\to\As_{pq}^{s\tau}(\R^n)$ is bounded.
    \item\label{Item::Property::DerNorm} For every $m\ge1$, we have equivalent norms $\|f\|_{\As_{p,q}^{s,\tau}(\R^n)}\approx_{p,q,s,\tau,m}\sum_{|\alpha|\le m}\|\partial^\alpha f\|_{\As_{p,q}^{s-m,\tau}(\R^n)}$.
\end{enumerate}
One can see \cite[Sections 6.1.1 and 6.2]{YSYMorrey}, \cite[Theorem 1.6]{WuYangYuanDer} and \cite[Theorem 3.3]{SawanoTanakaMorrey}  for their proof. See also \cite[Sections 3.4, 4.2 and 4.3]{HaroskeTriebelMorrey}. We remark that because of \eqref{Eqn::Intro::EqvSpaces} it is enough to consider the case $0\le \tau\le\frac1p$. We leave the details to the readers.

One can also write down the analogy of Proposition \ref{Prop::NormMax} and Corollary \ref{Cor::NormMaxP} similar to \eqref{Eqn::BddDomLPNorm::B}, \eqref{Eqn::BddDomLPNorm::F} and \eqref{Eqn::BddDomLPNorm::N}, we leave the details to the readers as well.
\end{rem}

Finally, we prove Theorem \ref{Thm::EqvNorm} using the following fact:
\begin{prop}[{\cite[Theorem 1.5 (ii)]{ShiYaoExt}}]\label{Prop::ShiYaoFact}
    Let $(\phi_j)_{j=1}^\infty$ be a family\footnote{Here the index of the Schwartz family start from $j=1$. In Definition \ref{Defn::Intro::SeqSpaces} we start with $j=0$.} of Schwartz functions satisfying \ref{Item::LPCond::Momt}, \ref{Item::LPCond::Scal} and \ref{Item::LPCond::Supp}. Recall that for every $j\ge1$, $\phi_j(x)=2^{(j-1)n}\phi_1(2^{j-1}x)$, $\int x^\alpha \phi_j(x)dx=0$ for all $\alpha$, and $\supp\phi_j\subset\{x_n<-A|x'|\}$ for some $A>0$.
    
    Then for any $m\ge1$, there are  families of Schwartz functions $\tilde\phi^\beta=(\tilde\phi^\beta_j)_{j=1}^\infty$ for $|\beta|=m$ that also satisfy \ref{Item::LPCond::Momt}, \ref{Item::LPCond::Scal} and \ref{Item::LPCond::Supp}, such that $$\phi_j=2^{-jm}\sum_{|\beta|=m}\partial^\beta\tilde\phi_j^\beta,\quad\text{for every }j\ge1.$$
\end{prop}

\begin{proof}[Proof of Theorem \ref{Thm::EqvNorm}]

Once the case of special Lipschitz domains is done, the proof of the case of bounded Lipschitz domains follows from the standard partition of unity argument (one can read \cite[Section 6]{ShiYaoExt} for details) along with the facts \ref{Item::Property::MultBdd}, \ref{Item::Property::CompBdd} and \ref{Item::Property::DerNorm} mentioned in Remark \ref{Rmk::RmkBddDom}.

\medskip
Let $\Omega\subset\R^n$ be a special Lipschitz domain. Let $f\in\As_{pq}^{s\tau}(\Omega)$ and let $\tilde f\in\As_{pq}^{s\tau}(\R^n)$ be an extension of $f$.  By \ref{Item::Property::DerNorm}  we have $\|\partial^\alpha \tilde f\|_{\As_{p,q}^{s-|\alpha|,\tau}(\R^n)}\lesssim_{p,q,s,\tau,\alpha}\|\tilde f\|_{\As_{pq}^{s\tau}(\R^n)}$.
Since $\partial^\alpha \tilde f$ is also an extension of $\partial^\alpha f$, by \eqref{Eqn::Intro::NormDomain} in Definition \ref{Defn::Intro::SeqSpaces}, taking the infimum over all extensions $\tilde f$ of $f$ we get $\sum_{|\alpha|\le m}\|\partial^\alpha f\|_{\As_{p,q}^{s-m,\tau}(\Omega)}\lesssim\|f\|_{\As_{pq}^{s\tau}(\Omega)}$.

To prove the converse inequality $\|f\|_{\As_{pq}^{s\tau}(\Omega)}\lesssim\sum_{|\alpha|\le m}\|\partial^\alpha f\|_{\As_{p,q}^{s-m,\tau}(\Omega)}$, let $(\phi_j,\psi_j)_{j=0}^\infty$ be as in \eqref{Eqn::ExtOp}. 

We let $(\tilde\phi^\beta_j)_{j=1}^\infty\subset\Ss(\R^n)$ ($|\beta|>0$) be given in Proposition \ref{Prop::ShiYaoFact}. Thus $\phi_j=2^{-jq}\sum_{\beta:|\beta|=q}\partial^\beta\tilde\phi^\beta_j$ for all $j,q\ge1$.

For $\alpha\neq0$, we define $\psi^\alpha=(\psi^\alpha_j)_{j=1}^\infty$ by $\psi^\alpha_j(x):=2^{-j|\alpha|}\partial^\alpha\psi_j(x)$ (for $j\ge1$). Thus the sequences $\psi^\alpha$ (for $\alpha\neq0$) all satisfy \ref{Item::LPCond::Momt}, \ref{Item::LPCond::Scal} and \ref{Item::LPCond::Supp}.

We define a family of linear operators,
\begin{equation}\label{Eqn::EAlphaBeta}
    E^{\alpha,0} f=E^{\alpha,0}_\Omega f:=\partial^\alpha\psi_0\ast(\1_\Omega\cdot(\phi_0\ast f)),\quad E^{\alpha,\beta}f=E^{\alpha,\beta}_\Omega f:=\sum_{j=1}^\infty \psi_j^\alpha\ast(\1_\Omega\cdot(\tilde\phi_j^\beta\ast f)),\text{ for }|\alpha|=|\beta|>0.
\end{equation}

For every $f\in\Ss'(\Omega)$ and for every multi-index $\alpha\neq0$, we see that
\begin{equation}\label{Eqn::EqvNormPfKey}
    \begin{aligned}
    \partial^\alpha E f&=\sum_{j=0}^\infty\partial^\alpha\psi_j\ast(\1_\Omega\cdot(\phi_j\ast f))=\partial^\alpha\psi_0\ast(\1_\Omega\cdot(\phi_0\ast f))+\sum_{j=1}^\infty
    \sum_{\beta:|\beta|=|\alpha|}2^{j|\alpha|}\psi_j^\alpha\ast(\1_\Omega\cdot2^{-j|\alpha|}(\partial^\beta \tilde\phi_j^\beta\ast f))
    \\
    &=\partial^\alpha\psi_0\ast(\1_\Omega\cdot(\phi_0\ast f))+\sum_{\beta:|\beta|=|\alpha|}\sum_{j=1}^\infty \psi_j^\alpha\ast(\1_\Omega\cdot(\tilde\phi_j^\beta\ast\partial^\beta f))=E^{\alpha,0}f+\sum_{\beta:|\beta|=|\alpha|}E^{\alpha,\beta}[\partial^\beta f].
\end{aligned}
\end{equation}

By Proposition \ref{Prop::Boundedness}, $E^{\alpha,0},E^{\alpha,\beta}:\As_{p,q}^{s-m,\tau}(\Omega)\to\As_{p,q}^{s-m,\tau}(\R^n)$ are all bounded. Therefore
\begin{equation*}
    \begin{aligned}
    \|f\|_{\As_{pq}^{s\tau}(\Omega)}\approx&\|E f\|_{\As_{pq}^{s\tau}(\R^n)}\overset{\text{\ref{Item::Property::DerNorm}}}{\approx}\sum_{|\alpha|\le m} \|\partial^\alpha Ef\|_{\As_{pq}^{s-m,\tau}(\R^n)}
    \\
    \overset{\text{\eqref{Eqn::EqvNormPfKey}}}\lesssim&\| Ef\|_{\As_{pq}^{s-m,\tau}(\R^n)}+\sum_{0<|\alpha|\le m}\Big(\| E^{\alpha,0}f\|_{\As_{pq}^{s-m,\tau}(\R^n)}+\sum_{\beta:|\beta|=|\alpha|}\| E^{\alpha,\beta}[\partial^\beta f]\|_{\As_{pq}^{s-m,\tau}(\R^n)}\Big)
    \\
    \lesssim&\|f\|_{\As_{pq}^{s-m,\tau}(\Omega)}+\sum_{0<|\alpha|\le m}\Big(\| f\|_{\As_{pq}^{s-m,\tau}(\Omega)}+\sum_{\beta:|\beta|=|\alpha|}\|\partial^\beta f\|_{\As_{pq}^{s-m,\tau}(\Omega)}\Big)\lesssim\sum_{|\beta|\le m}\|\partial^\beta f\|_{\As_{pq}^{s-m,\tau}(\Omega)}.
\end{aligned}
\end{equation*}

This completes the proof of \eqref{Eqn::Intro::EqvNorm} for the case of special Lipschitz domains.

The $\Fs_{\infty q}^s$-cases follow immediately from \eqref{Eqn::Intro::CharFInftyQ} since we have $\Fs_{\infty q}^s(\R^n)=\Fs_{qq}^{s,\frac1q}(\R^n)$.
\end{proof}

\section{Further Open Questions}\label{Sec::Rmk}
By the same method, using Lemma \ref{Lem::Heideman} - Proposition \ref{Prop::PeeEst}, it is possible for us to get the analogs of Theorems \ref{Thm::LPNorm} and \ref{Thm::EqvNorm} on the so-called \textit{local spaces}. 

The local version of $\As_{pq}^{s\tau}(\R^n)$ for $\As\in\{\Bs,\Fs,\Ns\}$, denoted by $\As_{p,q,\text{unif}}^{s,\tau}(\R^n)$,  is defined by replacing the supremum among the set of dyadic cubes $\Qc$ with $\{Q_{J,v}\in\Qc:J\ge0\}$. See \cite[Section 3.4]{SickelMorrey1} for example. For an open subset $\Omega\subseteq\R^n$ we use $\As_{p,q,\text{unif}}^{s,\tau}(\Omega):=\{\tilde f|_\Omega:\tilde f\in\As_{p,q,\text{unif}}^{s,\tau}(\R^n)\}$ similarly. For more details we refer \cite{TriebelLocal} to readers.

One can also consider the analog of Theorems \ref{Thm::LPNorm} and \ref{Thm::EqvNorm} on $\As_{p(\cdot),q(\cdot)}^{s(\cdot),\phi}$, the spaces with variable exponents. For example \cite{SunZhuoExtension}, which may require certain assumptions on the exponents.

\medskip
In Definition \ref{Defn::Intro::DefMorrey}, it is known that the norms are equivalent if $(\lambda_j)_{j=0}^\infty$ only satisfies the scaling condition \ref{Item::LPCond::Scal} and the \textit{Tauberian condition}:
    \begin{equation}\label{Eqn::LPCond::Taub}
        \text{There exist }\eps_0,c>0\quad\text{such that}\quad|\hat\lambda_0(\xi)|>c\text{ for }|\xi|<\eps_0,\quad\text{and }|\hat\lambda_1(\xi)|>c\text{ for }\eps_0/2<|\xi|<2\eps_0.
    \end{equation}
    See \cite[Theorems 2.5 and 2.6]{WuYangYuanDer} and \cite[Theorem 1]{XuBesovMorrey} for example.
    
    It is not known to the author whether we can replace the assumption \ref{Item::LPCond::AppId} for $(\phi_j)_{j=0}^\infty$ in Theorem \ref{Thm::LPNorm} with the Tauberian condition \eqref{Eqn::LPCond::Taub}.
    
\medskip
For Theorem \ref{Thm::EqvNorm}, we do not know whether \eqref{Eqn::Intro::EqvNorm} has the following improvement:
\begin{ques}
Keeping the assumptions of Theorem \ref{Thm::EqvNorm}, can we find a $C=C(\Omega,p,q,s,\tau,m)>0$ such that the following holds? 
\begin{equation*}
    \|f\|_{\As_{pq}^{s\tau}(\Omega)}\le C\Big(\|f\|_{\As_{p,q}^{s-m,\tau}(\Omega)}+\sum_{k=0}^n\Big\|\frac{\partial^m f}{\partial x_k^m}\Big\|_{\As_{p,q}^{s-m,\tau}(\Omega)}\Big),\quad\forall\,f\in\As_{pq}^{s\tau}(\Omega).
\end{equation*}
\end{ques}

Cf. \cite[Theorem 1.6]{WuYangYuanDer}. The question is open even for the classical Besov and Triebel-Lizorkin spaces $\As_{pq}^s(\Omega)$ when $\Omega$ is a (special or bounded) Lipschitz domain.

\begin{ack}
I  would like to thank Dorothee Haroske,  Wen Yuan and Ciqiang Zhuo for their informative discussions and advice.  I would also like to thank the referees for the comments and suggestions.
\end{ack}

\bibliographystyle{amsalpha}
\bibliography{reference} 
\end{document}